\DeclareMathAlphabet{\mathbf}{OT1}{cmr}{b}{n}
\def\matrixobject@{%
  \edef \next@{={\DirectionfromtheDirection@ }}%
  \expandafter \toks@ \next@ \plainxy@
  \let\xy@@ix@=\xyq@@toksix@  
  \xyFN@ \OBJECT@}
\let\xy@entry@@norm=\entry@@norm
\def\entry@@norm@patched{%
  \let\object@=\matrixobject@
  \xy@entry@@norm }
\newcommand{\twocong}[2][0.5]{\ar@{}[#2] \save ?(#1)*{\cong}\restore}
\newcommand{\twosim}[2][0.5]{\ar@{}[#2] \save ?(#1)*{\simeq}\restore}
\newcommand{\twoeq}[2][0.5]{\ar@{}[#2] \save ?(#1)*{=}\restore}
\newcommand{\rtwocell}[3][0.5]{\ar@{}[#2] \ar@{=>}?(#1)+/l 0.2cm/;?(#1)+/r 0.2cm/^{#3}}
\newcommand{\rtwocello}[3][0.5]{\ar@{}[#2] \ar@{=>}?(#1)+/l 0.2cm/;?(#1)+/r 0.2cm/_{#3}}
\newcommand{\ltwocell}[3][0.5]{\ar@{}[#2] \ar@{=>}?(#1)+/r 0.2cm/;?(#1)+/l 0.2cm/^{#3}}
\newcommand{\ltwocello}[3][0.5]{\ar@{}[#2] \ar@{=>}?(#1)+/r 0.2cm/;?(#1)+/l 0.2cm/_{#3}}
\newcommand{\dtwocell}[3][0.5]{\ar@{}[#2] \ar@{=>}?(#1)+/u  0.2cm/;?(#1)+/d 0.2cm/^{#3}}
\newcommand{\dltwocell}[3][0.5]{\ar@{}[#2] \ar@{=>}?(#1)+/ur  0.2cm/;?(#1)+/dl 0.2cm/^{#3}}
\newcommand{\drtwocell}[3][0.5]{\ar@{}[#2] \ar@{=>}?(#1)+/ul  0.2cm/;?(#1)+/dr 0.2cm/^{#3}}
\newcommand{\dthreecell}[3][0.5]{\ar@{}[#2] \ar@3{->}?(#1)+/u  0.2cm/;?(#1)+/d 0.2cm/^{#3}}
\newcommand{\utwocell}[3][0.5]{\ar@{}[#2] \ar@{=>}?(#1)+/d 0.2cm/;?(#1)+/u 0.2cm/_{#3}}
\newcommand{\dtwocelltarg}[3][0.5]{\ar@{}#2 \ar@{=>}?(#1)+/u  0.2cm/;?(#1)+/d 0.2cm/^{#3}}
\newcommand{\utwocelltarg}[3][0.5]{\ar@{}#2 \ar@{=>}?(#1)+/d  0.2cm/;?(#1)+/u 0.2cm/_{#3}}
\newcommand{\sh}[2]{**{!/#1 -#2/}}
\DeclareMathOperator{\ob}{ob}
\newcommand{\cat}[1]{\mathbf{#1}}
\newcommand{\op}{\mathrm{op}}
\newcommand{\id}{\mathrm{id}}
\newcommand{\thg}{{\mathord{\text{--}}}}
\newcommand{\comp}{\otimes}
\newcommand{\unit}{I}
\newcommand{\lp}{\cat{LexProf}}
\newcommand{\lfp}{\cat{LFP}}
\newcommand{\fl}{\cat{FL}}
\newcommand{\pfl}{\cat{PARFL}}
\newcommand{\af}{\smash{{\A_{f\,}}\!^\mathrm{op}}}
\newcommand{\ic}{\textstyle\int\!\C}
\newcommand{\spn}[1]{{\langle{#1}\rangle}}
\newcommand{\defeq}{\mathrel{\mathop:}=}
\newcommand{\cd}[2][]{\vcenter{\hbox{\xymatrix#1{#2}}}}
\renewcommand{\phi}{\varphi}
\newcommand{\A}{{\mathcal A}}
\newcommand{\C}{{\mathcal C}}
\newcommand{\D}{{\mathcal D}}
\newcommand{\F}{{\mathcal F}}
\newcommand{\I}{{\mathcal I}}
\renewcommand{\L}{{\mathcal L}}
\renewcommand{\S}{{\mathcal S}}
\newcommand{\T}{{\mathcal T}}
\newcommand{\V}{{\mathcal V}}
\newcommand{\W}{{\mathcal W}}
\newcommand{\xtor}[1]{\cdl[@1]{{} \ar[r]|-{\object@{|}}^{#1} & {}}}
\newcommand{\tor}{\ensuremath{\relbar\joinrel\mapstochar\joinrel\rightarrow}}
\def\hookleftarrowfill@{\arrowfill@\leftarrow\relbar{\relbar\joinrel\rhook}}
\def\twoheadleftarrowfill@{\arrowfill@\twoheadleftarrow\relbar\relbar}
\def\leftbararrowfill@{\arrowdoublefill@{\leftarrow\mkern-5mu}\relbar\mapstochar\relbar\relbar}
\def\Leftbararrowfill@{\arrowdoublefill@{\Leftarrow\mkern-2mu}\Relbar\Mapstochar\Relbar\Relbar}
\def\leftringarrowfill@{\arrowdoublefill@{\leftarrow\mkern-3mu}\relbar{\mkern-3mu\circ\mkern-2mu}\relbar\relbar}
\def\lefttriarrowfill@{\arrowfill@{\mathrel\triangleleft\mkern0.5mu\joinrel\relbar}\relbar\relbar}
\def\Lefttriarrowfill@{\arrowfill@{\mathrel\triangleleft\mkern1mu\joinrel\Relbar}\Relbar\Relbar}
\def\hookrightarrowfill@{\arrowfill@{\lhook\joinrel\relbar}\relbar\rightarrow}
\def\twoheadrightarrowfill@{\arrowfill@\relbar\relbar\twoheadrightarrow}
\def\rightbararrowfill@{\arrowdoublefill@{\relbar\mkern-0.5mu}\relbar\mapstochar\relbar\rightarrow}
\def\Rightbararrowfill@{\arrowdoublefill@{\Relbar\mkern-2mu}\Relbar\Mapstochar\Relbar\Rightarrow}
\def\rightringarrowfill@{\arrowdoublefill@\relbar\relbar{\mkern-2mu\circ\mkern-3mu}\relbar{\mkern-3mu\rightarrow}}
\def\righttriarrowfill@{\arrowfill@\relbar\relbar{\relbar\joinrel\mkern0.5mu\mathrel\triangleright}}
\def\Righttriarrowfill@{\arrowfill@\Relbar\Relbar{\Relbar\joinrel\mkern1mu\mathrel\triangleright}}
\def\leftrightarrowfill@{\arrowfill@\leftarrow\relbar\rightarrow}
\def\mapstofill@{\arrowfill@{\mapstochar\relbar}\relbar\rightarrow}
\renewcommand*\xleftarrow[2][]{\ext@arrow 20{20}0\leftarrowfill@{#1}{#2}}
\providecommand*\xLeftarrow[2][]{\ext@arrow 60{22}0{\Leftarrowfill@}{#1}{#2}}
\providecommand*\xhookleftarrow[2][]{\ext@arrow 10{20}0\hookleftarrowfill@{#1}{#2}}
\providecommand*\xtwoheadleftarrow[2][]{\ext@arrow 60{20}0\twoheadleftarrowfill@{#1}{#2}}
\providecommand*\xleftbararrow[2][]{\ext@arrow 10{22}0\leftbararrowfill@{#1}{#2}}
\providecommand*\xLeftbararrow[2][]{\ext@arrow 50{24}0\Leftbararrowfill@{#1}{#2}}
\providecommand*\xleftringarrow[2][]{\ext@arrow 10{26}0\leftringarrowfill@{#1}{#2}}
\providecommand*\xlefttriarrow[2][]{\ext@arrow 80{24}0\lefttriarrowfill@{#1}{#2}}
\providecommand*\xLefttriarrow[2][]{\ext@arrow 80{24}0\Lefttriarrowfill@{#1}{#2}}
\renewcommand*\xrightarrow[2][]{\ext@arrow 01{20}0\rightarrowfill@{#1}{#2}}
\providecommand*\xRightarrow[2][]{\ext@arrow 04{22}0{\Rightarrowfill@}{#1}{#2}}
\providecommand*\xhookrightarrow[2][]{\ext@arrow 00{20}0\hookrightarrowfill@{#1}{#2}}
\providecommand*\xtwoheadrightarrow[2][]{\ext@arrow 03{20}0\twoheadrightarrowfill@{#1}{#2}}
\providecommand*\xrightbararrow[2][]{\ext@arrow 01{22}0\rightbararrowfill@{#1}{#2}}
\providecommand*\xRightbararrow[2][]{\ext@arrow 04{24}0\Rightbararrowfill@{#1}{#2}}
\providecommand*\xrightringarrow[2][]{\ext@arrow 01{26}0\rightringarrowfill@{#1}{#2}}
\providecommand*\xrighttriarrow[2][]{\ext@arrow 07{24}0\righttriarrowfill@{#1}{#2}}
\providecommand*\xRighttriarrow[2][]{\ext@arrow 07{24}0\Righttriarrowfill@{#1}{#2}}
\providecommand*\xmapsto[2][]{\ext@arrow 01{20}0\mapstofill@{#1}{#2}}
\providecommand*\xleftrightarrow[2][]{\ext@arrow 10{22}0\leftrightarrowfill@{#1}{#2}}
\providecommand*\xLeftrightarrow[2][]{\ext@arrow 10{27}0{\Leftrightarrowfill@}{#1}{#2}}
\begin{document}
\leftmargini=2em \title[An enriched view on the monad--theory
correspondence]{An enriched view on the extended\\finitary
  monad--Lawvere theory correspondence}

\author[R.~Garner]{Richard Garner}
\address{Department of Mathematics, Macquarie University, NSW 2109,
  Australia}
\email{richard.garner@mq.edu.au}

\author[J.~Power]{John Power}
\address{Department of Computer Science, University of Bath, Claverton
  Down, Bath BA2 7AY, United Kingdom}
\email{A.J.Power@bath.ac.uk}

\subjclass[2010]{Primary: 18C10, 18C35, 18D20}

\keywords{Lawvere theory, finitary monad, locally finitely presentable
  category, enrichment in a bicategory, absolute colimits}


\thanks{The authors gratefully acknowledge the support of Australian
  Research Council grants DP160101519 and FT160100393 and of the Royal
  Society International Exchanges scheme, grant number IE151369. No
  data was generated in the research for this paper.}

\begin{abstract}
  We give a new account of the correspondence, first established by
  Nishizawa--Power, between finitary monads and Lawvere theories over
  an arbitrary locally finitely presentable base. Our account explains
  this correspondence in terms of enriched category theory: the
  passage from a finitary monad to the corresponding Lawvere theory is
  exhibited as an instance of \emph{free completion} of an enriched
  category under a class of absolute colimits. This extends work of
  the first author, who established the result in the special case of
  finitary monads and Lawvere theories over the category of sets; a
  novel aspect of the generalisation is its use of enrichment over a
  \emph{bicategory}, rather than a monoidal category, in order to
  capture the monad--theory correspondence over all locally finitely
  presentable bases simultaneously.
\end{abstract}

\dedicatory{Dedicated to Ji\v r\' i Ad\' amek on the occasion of his retirement}
\maketitle

\section{Introduction}
\label{sec:introduction}

A key theme of Ji\v r\' i Ad\' amek's superlative research career has
been the study of the subtle interaction between monads and theories,
especially within computer science. We hope he might see this paper as
a development of the abstract mathematics underlying this aspect of
his body of work. Ji\v r\' i has been an inspiration to both of us, on
both a scientific and a personal level, as he has been to many in
category theory and beyond, and we are therefore pleased to
dedicate this paper to him.

The starting point of our development is the well-known fact that
categorical universal algebra provides two distinct ways to approach
the notion of (single-sorted, finitary) equational algebraic theory.
On the one hand, any such theory $\mathbb{T}$ gives rise to a Lawvere
theory whose models coincide (to within coherent isomorphism) with the
$\mathbb{T}$-models. Recall that a \emph{Lawvere theory} is a small
category $\L$ equipped with an identity-on-objects, strict
finite-power-preserving functor
$\mathbb{F}^\mathrm{op} \rightarrow \L$ and that a \emph{model} of a
Lawvere theory is a finite-power-preserving functor
$\L \rightarrow \cat{Set}$, where here $\mathbb{F}$ denotes the
category of finite cardinals and mappings.

On the other hand, an algebraic theory $\mathbb{T}$ gives rise to a
finitary (i.e., filtered-colimit-preserving) monad on the category of
sets, whose Eilenberg--Moore algebras also coincide with the
$\mathbb T$-models. We can pass back and forth between the
presentations using finitary monads and Lawvere theories in a manner
compatible with semantics; this is encapsulated by an equivalence of
categories fitting into a triangle
\begin{equation}\label{eq:8}
  \cd[@!C@C-3.5em]{
    \cat{Mnd}_f(\cat{Set})^\mathrm{op}
    \ar[dr]_-{\cat{Alg}(\thg)}\ar[rr]^(0.5){\simeq} & \twosim{d} & \cat{Law}^\mathrm{op}
    \ar[dl]^-{\cat{Mod}(\thg)} \\ & \cat{CAT} &
  }
\end{equation}
which commutes up to pseudonatural equivalence. Both these categorical
formulations of equational algebraic structure are \emph{invariant}
with respect to the models, meaning that whenever two algebraic
theories have isomorphic categories of models, the Lawvere theories
and the monads they induce are also isomorphic. However, the two
approaches emphasise different aspects of an equational theory
$\mathbb{T}$. On the one hand, the Lawvere theory $\L$ encapsulates
the \emph{operations} of $\mathbb{T}$: the hom-set $\L(m,n)$ comprises
all the operations $X^m \rightarrow X^n$ which are definable in any
$\mathbb{T}$-model $X$. On the other hand, the action of the monad $T$
encapsulates the construction of the \emph{free $\mathbb{T}$-model} on
any set; though since there are infinitary equational theories which
also admit free models, the restriction to finitary monads is
necessary to recover the equivalence with Lawvere theories.

While the equivalence in~\eqref{eq:8} is not hard to construct, there
remains the question of how it should be understood. Clarifying this
point was the main objective of~\cite{Garner2014Lawvere}: it describes
a setting within which both finitary monads on $\cat{Set}$ and Lawvere
theories can be considered on an equal footing, and in which the
passage from a finitary monad to the associated Lawvere theory can be
understood as an instance of the same process by which one associates:
\begin{enumerate}[(a)]
\item To a locally small category $\C$, its Karoubian envelope;
\item To a ring $R$, its category of
  finite-dimensional matrices;
\item To a metric space, its Cauchy-completion.
\end{enumerate}

The setting is that of $\V$-enriched category
theory~\cite{Kelly1982Basic}; while the process is that of free
completion under a class of \emph{absolute
  colimits}~\cite{Street1983Absolute}---colimits that are preserved by
any $\V$-functor. The above examples are instances of such a
completion, since:
\begin{enumerate}[(a),itemsep=0.25\baselineskip]
\item Each locally small category is a $\cat{Set}$-category, and splittings
  of idempotents are $\cat{Set}$-absolute colimits;
\item Each ring can be seen as a one-object $\cat{Ab}$-category and
  the corresponding
  category of finite-dimensional matrices can be obtained by freely adjoining finite
  biproducts---which are $\cat{Ab}$-absolute colimits;
\item Each metric space can be seen as an
  $\mathbb{R}_{+}^\infty$-category---where $\mathbb{R}_+^\infty$ is
  the monoidal poset of non-negative reals extended by infinity, as
  defined in~\cite{Lawvere1973Metric}---and its Cauchy-completion can
  be obtained by adding limits for Cauchy sequences which, again as
  in~\cite{Lawvere1973Metric}, are $\mathbb{R}_{+}^\infty$-absolute
  colimits.
\end{enumerate}

In order to fit~\eqref{eq:8} into this same setting, one takes as
enrichment base the category $\F$ of finitary endofunctors of
$\cat{Set}$, endowed with its composition monoidal structure. On the
one hand, finitary monads on $\cat{Set}$ are the same as monoids in
$\F$, which are the same as one-object $\F$-categories. On the other
hand, Lawvere theories may also be identified with certain
$\F$-categories; the argument here is slightly more involved, and may
be summarised as follows.

A key result of~\cite{Garner2014Lawvere}, recalled as
Proposition~\ref{prop:11} below, identifies $\F$-categories admitting
all absolute \emph{tensors} (a kind of enriched colimit) with ordinary
categories admitting all finite powers. In one direction, we obtain a
category with finite powers from an absolute-tensored $\F$-category by
taking the underlying ordinary category; in the other, we use a
construction which generalises the \emph{endomorphism monad} (or in
logical terms the \emph{complete theory}) of an object in a category
with finite powers.

Using this key result, we may identify Lawvere theories with
identity-on-objects strict absolute-tensor-preserving $\F$-functors
$\mathbb{F}^\mathrm{op} \rightarrow \L$, where, overloading notation,
we use $\mathbb{F}^\mathrm{op}$ and $\L$ to denote not just the
relevant categories with finite powers, but also the corresponding
$\F$-categories. In~\cite{Garner2014Lawvere}, the $\F$-categories
equipped with an $\F$-functor of the above form were termed
\emph{Lawvere $\F$-categories}.

By way of the above identifications, the equivalence~\eqref{eq:8}
between finitary monads and Lawvere theories can now be re-expressed
as an equivalence between one-object $\F$-categories and Lawvere
$\F$-categories: which can be obtained via the standard
enriched-categorical process of \emph{free completion} under all
absolute tensors. The universal property of this completion also
explains the compatibility with the semantics in~\eqref{eq:8}; we
recall the details of this in Section~\ref{sec:one-object-case} below.

We thus have three categorical perspectives on equational algebraic
theories: as Lawvere theories, as finitary monads on $\cat{Set}$, and
(encompassing the other two) as $\F$-categories. It is natural to ask
if these perspectives extend so as to account for algebraic structure
borne not by \emph{sets} but by objects of an arbitrary locally
finitely presentable category $\A$. This is of practical interest,
since such structure arises throughout mathematics and computer
science, as in, for example, sheaves of rings, or monoidal categories,
or the second-order algebraic structure of~\cite{Fiore1999Abstract}.

The approach using monads extends easily: we simply replace finitary
monads on $\cat{Set}$ by finitary monads on $\A$. The approach using
Lawvere theories also extends, albeit more delicately, by way of the
\emph{Lawvere $\A$-theories} of~\cite{Nishizawa2009Lawvere,
  Lack2009Gabriel-Ulmer}. If we write $\A_f$ for a skeleton of the
full subcategory of finitely presentable objects in $\A$, then a
Lawvere $\A$-theory is a small category $\L$ together with an
identity-on-objects finite-limit-preserving\footnote{Note the absence
  of the qualifier ``strict''; for a discussion of this, see
  Remark~\ref{rk:2} below.} functor $J \colon \af \rightarrow \L$;
while a model of this theory is a functor $\L \rightarrow \cat{Set}$
whose restriction along $J$ preserves finite limits. These definitions
are precisely what is needed to extend the equivalence~\eqref{eq:8} to
one of the form:
\begin{equation}\label{eq:7}
  \cd[@!C@C-3em]{
    \cat{Mnd}_f(\A)^\mathrm{op}
    \ar[dr]_-{\cat{Alg}(\thg)}\ar[rr]^(0.5){\simeq} & \twosim{d} & \cat{Law}(\A)^\mathrm{op}\rlap{ .}
    \ar[dl]^-{\cat{Mod}(\thg)} \\ & \cat{CAT} &
  }
\end{equation}

What does not yet exist in this situation is an extension of the
third, enriched-categorical perspective; the objective of this paper
is to provide one. Like in~\cite{Garner2014Lawvere}, this will provide
a common setting in which the approaches using monads and using
Lawvere theories can coexist; and, like before, it will provide an
explanation as to \emph{why} an equivalence~\eqref{eq:7} should exist,
by exhibiting it as another example of a completion under a class of
absolute colimits.

There is a subtlety worth remarking on in how we go about this. One
might expect that, for each locally finitely presentable $\A$, one
simply replaces the monoidal category $\F$ of finitary endofunctors of
$\cat{Set}$ by the monoidal category $\F_\A$ of finitary endofunctors
of $\A$, and then proceeds as before. This turns out not to work in
general: there is a paucity of $\F_\A$-enriched absolute colimits,
such that freely adjoining them to a one-object $\F_\A$-category does
not necessary yield something resembling the notion of Lawvere
$\A$-theory.

In overcoming this apparent obstacle, we are led to a global analysis
which is arguably more elegant: it involves a single
enriched-categorical setting in which finitary monads and Lawvere
theories over all locally finitely presentable bases coexist
simultaneously, and in which the monad--theory correspondences for
each $\A$ arise as instances of the same free completion
process.

This setting involves enrichment not in the monoidal category of
finitary endofunctors of a particular $\A$, but in the
\emph{bicategory} $\lfp$ of finitary functors between locally
presentable categories\footnote{In fact, when it comes down to it, we
  will work not with $\lfp$ itself, but with a biequivalent bicategory
  $\lp$ of \emph{lex profunctors}; see
  Section~\ref{sec:lex-prof-finit}.}. The theory of categories
enriched in a bicategory was developed
in~\cite{Walters1981Sheaves,Street1983Enriched} and will be recalled
in Section~\ref{sec:ingr-gener} below; for now, note that a one-object
$\lfp$-enriched category is a monad in $\lfp$, thus, a finitary monad
on a locally finitely presentable category. This explains one side of
the correspondence~\eqref{eq:7}; for the other, we extend the key
technical result of~\cite{Garner2014Lawvere} by showing that
absolute-tensored $\lfp$-categories can be identified with what we
call \emph{partially finitely complete} categories. These will be
defined in Section~\ref{sec:part-finite-completeness} below; they are
categories $\C$, not necessarily finitely complete, that come equipped
with a sieve of finite-limit-preserving functors expressing which
finite limits do in fact exist in $\C$.

The relevance of this result is as follows. If
$J \colon \af \rightarrow \L$ is a Lawvere $\A$-theory, then we may
view both $\af$ and $\L$ as partially finitely complete, and so as
absolute-tensored $\lfp$-categories, on equipping the former with the
sieve of \emph{all} finite-limit-preserving functors into it, and the
latter with the sieve of all finite-limit-preserving functors which
factor through $J$. In this way, we can view a Lawvere $\A$-theory as
a particular kind of $\lfp$-functor $\af \rightarrow \L$, where, as
before, we overload notation by using the same names for the
$\lfp$-enriched categories as for the ordinary categories from which
they are derived.

If we term the $\lfp$-functors arising in this way \emph{Lawvere
  $\lfp$-categories}, then our reconstruction of the
equivalence~\eqref{eq:7} will follow, exactly as
in~\cite{Garner2014Lawvere}, upon showing the equivalence of
one-object $\lfp$-categories, and Lawvere $\lfp$-categories; and,
exactly as before, we will obtain this equivalence via the
enriched-categorical process of \emph{free completion} under absolute
tensors. Moreover, the universal property of this free completion once
again explains the compatibility of this equivalence with the taking
of semantics; thereby giving an enriched-categorical explanation of
the entire triangle~\eqref{eq:7}.

\section{The one-object case}
\label{sec:one-object-case}

In this section, we summarise and discuss the manner in
which~\cite{Garner2014Lawvere} reconstructs the equivalence of
finitary monads on $\cat{Set}$ and Lawvere theories from an
enriched-categorical viewpoint. Most of what we will say is simply
revision, but note that the points clarified by
Proposition~\ref{prop:2} and Example~\ref{ex:4} are new.

We start from the observation that finitary monads on $\cat{Set}$ are
equally monoids in the monoidal category $\F$ of finitary endofunctors
of $\cat{Set}$, so equally one-object \emph{$\F$-enriched categories}
in the sense of~\cite{Kelly1982Basic}. More precisely:
\begin{prop}
  \label{prop:13}
  The category of finitary monads on $\cat{Set}$ is equivalent to the
  category of one-object $\F$-enriched categories.
\end{prop}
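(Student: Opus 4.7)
The plan is to exhibit this as an instance of the standard correspondence between monoids in a monoidal category $\V$ and one-object $\V$-categories, specialised to $\V = \F$ under composition. First I would unpack the notion of one-object $\F$-enriched category: such a datum consists of a single chosen object $\star$ together with a hom-object $M \defeq \C(\star,\star) \in \F$, a composition morphism $m \colon M \comp M \to M$ in $\F$, and a unit morphism $e \colon \unit \to M$ in $\F$, subject to the associativity and unitality axioms of an enriched category. These axioms say precisely that $(M, m, e)$ is a monoid in the monoidal category $(\F, \comp, \unit)$.

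Next I would observe that a monoid in $\F$ is by definition exactly a finitary monad on $\cat{Set}$: the underlying endofunctor is finitary by membership in $\F$, and $m, e$ are the natural transformations $\mu \colon TT \To T$ and $\eta \colon \id \To T$ making $T$ into a monad, with the monoid axioms in $\F$ being literally the monad axioms. On morphisms, an $\F$-functor $F \colon \C \to \D$ between one-object $\F$-categories (with chosen objects $\star_\C, \star_\D$) is forced on objects to send $\star_\C$ to $\star_\D$, and is determined by the single morphism $F_{\star_\C, \star_\C} \colon \C(\star_\C,\star_\C) \to \D(\star_\D, \star_\D)$ in $\F$; the $\F$-functor axioms say precisely that this is a monoid (i.e.\ monad) morphism.

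Putting these observations together yields a functor from the category of one-object $\F$-categories and $\F$-functors to the category $\cat{Mnd}_f(\cat{Set})$ of finitary monads on $\cat{Set}$, and the above unpackings show it is essentially surjective and fully faithful; the reason one obtains an equivalence rather than an isomorphism is simply the choice of object naming in the one-object $\F$-category, which is inessential.

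There is no real obstacle here: the statement is essentially a restatement of the folklore equivalence between monoids in $\V$ and one-object $\V$-categories (see, e.g., \cite{Kelly1982Basic}), combined with the elementary identification of monoids in $(\F, \comp, \unit)$ with finitary monads on $\cat{Set}$. The only care required is in tracking that \emph{$\F$-functors} between one-object $\F$-categories correspond exactly to monad morphisms (and not, say, to some weaker notion), which is immediate from the definitions.
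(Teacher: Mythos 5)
Your proposal is correct and matches the paper's (unstated) argument exactly: the paper offers no formal proof of this proposition, merely the observation that finitary monads on $\cat{Set}$ are monoids in $\F$, which are one-object $\F$-categories, and your unpacking of the monoid/one-object-category correspondence and the check that $\F$-functors correspond to monad morphisms is precisely the intended justification.
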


To understand Lawvere theories from the $\F$-enriched perspective is a
little more involved. As a first step, note that if
$J \colon \mathbb{F}^\mathrm{op} \rightarrow \L$ is a Lawvere theory,
then both $\mathbb{F}^\mathrm{op}$ and $\L$ are categories with finite
powers, and $J$ is a finite-power-preserving functor between them.
Thus the desired understanding flows from one of the key results
of~\cite{Garner2014Lawvere}, which shows that categories admittings
finite powers are equivalent to $\F$-enriched categories admitting all
\emph{absolute tensors} in the following sense.

\begin{defi}
  If $\V$ is a monoidal category and $\C$ is a $\V$-category, then a
  \emph{tensor} of $X \in \C$ by $V \in \V$ comprises an object
  $V \cdot X \in \C$ and morphism
  $u \colon V \rightarrow \C(X, V \cdot X)$ in $\V$, such that, for
  any $U \in \V$ and $Y \in \C$, the assignation
  \begin{equation}\label{eq:16}
    U \xrightarrow{f} \C(V \cdot X, Y) \ \ \mapsto \ \ 
    U \otimes V \xrightarrow{f \otimes u} \C(V \cdot X, Y) \otimes \C(X,
    V \cdot X) \xrightarrow{\circ} \C(X, Y)
  \end{equation}
  gives a bijection between maps $U \rightarrow \C(V \cdot X, Y)$ and
  $U \otimes V \rightarrow \C(X,Y)$ in $\V$. Such a tensor is said to
  be \emph{preserved} by a $\V$-functor $F \colon \C \rightarrow \D$ if
  the composite morphism
  $F_{X,V\cdot X} \circ u \colon V \rightarrow \C(X,V\cdot X)
  \rightarrow \D(FX, F(V \cdot X))$ in $\V$ exhibits $F(V \cdot X)$ as
  $V \cdot FX$. Tensors by $V \in \V$ are said to be \emph{absolute} if
  they are preserved by any $\V$-functor.
\end{defi}

There is a delicate point here. The theory of \cite{Kelly1982Basic}
considers enrichment only over a symmetric monoidal closed base; by
contrast, our $\F$ is non-symmetric and \emph{right-closed}---meaning
that there exists a right adjoint $[V, \thg]$ to the functor
$(\thg) \otimes V$ tensoring on the \emph{right} by an object $V$. The
value $[V,W]$ of this right adjoint can be computed by first forming
the right Kan extension $\mathrm{Ran}_V W \in [\cat{Set}, \cat{Set}]$,
and then the finitary coreflection of that. On the other hand, $\F$ is
not \emph{left-closed}---meaning that there is not always a right
adjoint to the functor $V \otimes (\thg)$ tensoring on the \emph{left}
by $V$---because $V \otimes (\thg)$ will not be cocontinuous if $V$
itself is not cocontinuous.

While the non-symmetric, right-closed setting is too weak to allow
constructions such as opposite $\F$-categories, functor
$\F$-categories, or tensor product of $\F$-categories, it is strong
enough to allow for a good theory of $\F$-enriched colimits---of which
absolute tensors are an example. In particular, we may give the
following tractable characterisation of the absolute tensors over a
right-closed base. In the statement, recall that a \emph{left dual}
for $V \in \V$ is a left adjoint for $V$ seen as a $1$-cell in the
one-object bicategory corresponding to $\V$.

\begin{prop}\label{prop:2}
  Let $\V$ be a right-closed monoidal category. Tensors by $V \in \V$
  are absolute if and only if $V$ admits a left dual in $\V$.
\end{prop}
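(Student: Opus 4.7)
The plan is to treat the two directions separately, relying on the right-closedness in complementary ways.

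\textbf{Backward direction ($\Leftarrow$).} Given a left dual $V^*$ with unit $\eta \colon I \to V \otimes V^*$ and counit $\epsilon \colon V^* \otimes V \to I$, I would first observe that $[V, \thg]$ is naturally isomorphic to $\thg \otimes V^*$: pre-composing the adjunction $V^* \dashv V$ of $1$-cells in the one-object bicategory $\Sigma\V$ with an arbitrary $1$-cell yields an adjunction $\thg \otimes V \dashv \thg \otimes V^*$ of endofunctors of $\V$, and by uniqueness of right adjoints this must agree with the right-closed adjunction $\thg \otimes V \dashv [V, \thg]$. I then characterise the tensor $V \cdot X$ in any $\V$-category $\C$ by a splitting: it consists of an object $Z$ equipped with morphisms $u \colon V \to \C(X, Z)$ and $u^\vee \colon V^* \to \C(Z, X)$ satisfying two ``zig-zag'' identities built from $\eta$, $\epsilon$ and the composition in $\C$. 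Both identities refer only to homs, composition and identities, so any $\V$-functor $F \colon \C \to \D$ transports $(Z, u, u^\vee)$ to a splitting $(FZ, Fu, Fu^\vee)$ of $V \cdot FX$ in $\D$; this is absoluteness.

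\textbf{Forward direction ($\Rightarrow$).} The right-closedness equips $\V$ itself with the structure of a $\V$-category, with $\V(X, Y) = [X, Y]$ and composition obtained by transposing the canonical evaluations. In this $\V$-category the tensor $V \cdot I$ exists and is computed by $V$ itself, with unit the identity map $V \to [I, V] = V$. By hypothesis this tensor is preserved by every $\V$-functor out of $\V$, and in particular by the representable $[V, \thg] \colon \V \to \V$; preservation yields an isomorphism
\[
\alpha \colon V \otimes V^* \to [V, V] \qquad \text{where } V^* := [V, I],
\]
defined as the transpose of $V \otimes V^* \otimes V \xrightarrow{V \otimes \epsilon} V$, with $\epsilon \colon V^* \otimes V \to I$ the evaluation counit of $\thg \otimes V \dashv [V, \thg]$. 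I then set $\eta \colon I \to V \otimes V^*$ to be $\alpha^{-1}$ applied to the name $\id_V \colon I \to [V, V]$ of the identity; the first triangle identity $(V \otimes \epsilon)(\eta \otimes V) = \id_V$ is then immediate from how $\alpha$ was defined.

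\textbf{The second triangle identity} $(\epsilon \otimes V^*)(V^* \otimes \eta) = \id_{V^*}$ is the step I expect to be trickiest, as in the non-symmetric setting it does not follow from the first alone. My approach is to tensor the first triangle on the left by $V^*$, yielding $(V^* V \otimes \epsilon) \circ (V^* \otimes \eta \otimes V) = \id_{V^* V}$; post-composing with $\epsilon$ and invoking the interchange law for $\otimes$ then rewrites the result as $\epsilon \circ (\theta \otimes V) = \epsilon$, where $\theta := (\epsilon \otimes V^*)(V^* \otimes \eta)$. Since $\epsilon$ is the counit of $\thg \otimes V \dashv [V, \thg]$, its universal property forces $\theta = \id_{V^*}$, which is exactly the second triangle. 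The whole argument thus proceeds by formal diagrammatic manipulation, and never needs opposite $\V$-categories, tensor products of $\V$-categories or left internal homs---none of which exist in the non-symmetric setting.
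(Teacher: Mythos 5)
Your proposal is correct and follows essentially the same route as the paper: the backward direction via the diagrammatic (zig-zag/splitting) characterisation of tensors by a right adjoint, which is manifestly preserved by any $\V$-functor, and the forward direction by viewing $\V$ as a $\V$-category via right-closedness, using preservation of the tensor $V\cdot I$ by the $\V$-functor $[V,\thg]$ to invert the canonical map $V\otimes[V,I]\to[V,V]$, defining $\eta$ as the preimage of the name of the identity, and obtaining the second triangle identity by transposing under the closure adjunction. Your explicit derivation of that second identity is just an unpacking of the paper's ``transposing under adjunction and using bifunctoriality of $\otimes$''.
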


This result originates in~\cite{Street1983Absolute}, though some
adaptations to the proof are required in the non-left-closed setting;
we defer giving these to Section~\ref{sec:absolute-tensors-w} below,
where we will give a proof which works in the more general
bicategory-enriched context.

For now, applying this result when $\V = \F$, we see that tensors by
an object $F \in \F$---a finitary endofunctor of $\cat{Set}$---are
absolute just when $F$ has a (necessarily finitary) left adjoint $G$.
In this case, by adjointness we must have $F \cong (\thg)^{G1}$;
moreover, in order for $F$ to be a finitary endofunctor, $G1$ must be
finitely presentable in $\cat{Set}$, thus, a finite set. So an
$\F$-category is absolute-tensored just when it admits tensors by
$(\thg)^n \in \F$ for all $n \in \mathbb{N}$.
In~\cite{Garner2014Lawvere}, such $\F$-categories were called   
\emph{representable}.

The following further characterisation of the absolute-tensored
$\F$-categories is Proposition~3.8 of~\emph{ibid}.; in the statement,
we write $\F\text-\cat{CAT}_\mathrm{abs}$ for the $2$-category of
absolute-tensored $\F$-categories and all (necessarily
absolute-tensor-preserving) $\F$-functors and $\F$-transformations,
and write $\cat{FPOW}$ for the $2$-category of categories with finite
powers and finite-power-preserving functors.

\begin{prop}
  \label{prop:11}
  The underlying ordinary category of any absolute-tensored
  \mbox{$\F$-category} admits finite powers, while the underlying ordinary
  functor of any \mbox{$\F$-functor} between representable $\F$-categories
  preserves finite powers. The induced $2$-functor
  $\F\text-\cat{CAT}_\mathrm{abs} \rightarrow \cat{FPOW}$ is an
  equivalence of $2$-categories.
\end{prop}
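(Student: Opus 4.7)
The plan is to exhibit the 2-equivalence by building a pseudo-inverse 2-functor $(\thg)^\sim \colon \cat{FPOW} \to \F\text-\cat{CAT}_\mathrm{abs}$ and checking that the two composites are pseudo-naturally isomorphic to the identity.

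For the forward direction, let $\C$ be an absolute-tensored $\F$-category; I would show that $(-)^n \cdot X$ is the $n$-th power of $X$ in $\C_0$. The crucial observation is that for each $Y \in \C$, the assignation $X \mapsto \C(Y, X)$ extends to an $\F$-functor $\C(Y, \thg) \colon \C \to \F$, where $\F$ is self-enriched via its right internal hom $[V, W]_r$ (which exists for all $V, W$ since $\F$ is right-closed). Its action on morphisms is the adjoint transpose of composition $\C(X, X') \otimes \C(Y, X) \to \C(Y, X')$, and the $\F$-functor axioms follow directly from associativity and unitality in $\C$. Since $(-)^n$ admits the left dual $- \times n$ in $\F$, Proposition~\ref{prop:2} ensures that tensors by $(-)^n$ are absolute, hence preserved by $\C(Y, \thg)$; and in $\F$ itself, the tensor of $W$ by $V$ is just $V \otimes W$, so $\C(Y, (-)^n \cdot X) \cong (-)^n \circ \C(Y, X)$ in $\F$. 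Evaluating at $1$ gives $\C_0(Y, (-)^n \cdot X) \cong \C_0(Y, X)^n$, naturally in $Y$---the universal property of the $n$-th power. Since every $\F$-functor preserves absolute tensors and $\F$-natural transformations unfold to ordinary natural transformations between the underlying functors, this extends to a well-defined $2$-functor into $\cat{FPOW}$.

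For the pseudo-inverse, given $\C \in \cat{FPOW}$, define $\tilde\C$ to have the same objects as $\C$ and homs determined on finite $n$ by $\tilde\C(X, Y)(n) = \hom_\C(X^n, Y)$, extended to $\cat{Set}$ by left Kan extension along the inclusion $\mathbb{F} \hookrightarrow \cat{Set}$; composition is ``tuple-and-compose'', sending $(f \colon m \to \hom(X^n, Y),\ g \colon Y^m \to Z)$ to $g \circ \langle f_1, \ldots, f_m \rangle$. One then verifies that $X^n$ carries the structure of a tensor of $X$ by $(-)^n$, with universal element $\mathrm{id}_{X^n} \in \tilde\C(X, X^n)(n)$; the tensor adjunction is witnessed by the direct computation $\tilde\C(X^n, Y)(S) = \hom(X^{n \cdot S}, Y) = \tilde\C(X, Y)(S \times n) = \tilde\C(X, Y) \circ V^*(S)$ with $V^* = - \times n$. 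Finite-power-preserving functors and their natural transformations lift strictly. The two composites then agree up to canonical isomorphism: $(\tilde\C)_0 = \C$ on the nose for $\C \in \cat{FPOW}$; and for $\C \in \F\text-\cat{CAT}_\mathrm{abs}$, the chain of natural identifications $\widetilde{(\C_0)}(X, Y)(n) = \C_0(X^n, Y) = \C_0((-)^n \cdot X, Y) = \C(X, Y)(n)$---the last step being the tensor universal property---yields an isomorphism of $\F$-categories $\widetilde{(\C_0)} \cong \C$. The corresponding equivalences on hom-categories follow from the same identifications: $\F$-functors are determined by their underlying finite-power-preserving functors, and $\F$-natural transformations coincide with ordinary ones.

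The main technical obstacle I anticipate is verifying that $\C(Y, \thg) \colon \C \to \F$ is genuinely an $\F$-functor in the right-closed (but not left-closed) setting, since there is no naive two-sided internal hom on $\F$. The resolution hinges on the fact that $[V, W]_r$ exists for arbitrary $V, W \in \F$, which suffices to endow $\F$ with the structure of an $\F$-category over itself and to make the transpose of composition into a well-defined morphism in $\F$; the functoriality axioms for $\C(Y, \thg)$ then reduce cleanly to the underlying $\F$-category axioms of $\C$.
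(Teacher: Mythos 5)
Your proposal is correct, and its second half (the pseudo-inverse $\C \mapsto \tilde\C$ with homs $\hom_\C(X^n,Y)$, tuple-and-compose composition, tensors $y_n\cdot X = X^n$ with unit $\id_{X^n}$, and the identifications $(\tilde\C)_0 \cong \C$ and $\widetilde{(\C_0)} \cong \C$) coincides with the paper's construction of $\underline\D$ essentially verbatim. Where you genuinely diverge is in the forward direction. The paper argues at the level of elements: from the bijection~\eqref{eq:16} with $U = y_1$ it extracts projections $p_1,\dots,p_n \in \C(y_n\cdot X, X)(1)$ with $p_i \circ (u) = \pi_i$, proves $u \circ (p_1,\dots,p_n) = \id$, and verifies the power universal property directly from the $\F$-category axioms for the substitution tensor. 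You instead pass through the covariant representable $\F$-functor $\C(Y,\thg)\colon \C \to \F$ into the self-enriched base, use absoluteness to conclude $\C(Y, y_n\cdot X)\cong y_n \otimes \C(Y,X)$, and evaluate at $1$. This is sound: you correctly identify that only right-closedness is needed both to self-enrich $\F$ and to transpose composition into the structure map $\C(X,X') \to [\C(Y,X),\C(Y,X')]$ (it is the contravariant representables that would need left-closedness), and the co-Yoneda computation $(y_n\otimes B)(1)\cong B(1)^n$ does the rest. Your route is more conceptual and is in fact closer in spirit to how the paper later handles the bicategorical case (the tensored $\W$-categories $a/\W$ in the proof of Proposition~\ref{prop:27}); the paper's element chase is more self-contained and avoids having to set up the self-enrichment. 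Two points you wave at that deserve the routine check: naturality in $Y$ of your isomorphism $\C_0(Y, y_n\cdot X)\cong \C_0(Y,X)^n$ (it reduces to associativity in $\C$, since the map is $(g_1,\dots,g_n)\mapsto u\circ(g_1,\dots,g_n)$), and the fact that your computed isomorphism $\tilde\C(X^n,Y)\cong \tilde\C(X,Y)\otimes(\thg\times n)$ is the one induced by the unit $\id_{X^n}$ rather than merely an abstract isomorphism. Neither is a gap, just bookkeeping the paper also elides.
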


This result is the technical heart of~\cite{Garner2014Lawvere}; as
there, the task of giving its proof will be eased if we replace the
category of finitary endofunctors of $\cat{Set}$ by the equivalent
functor category $[\mathbb{F}, \cat{Set}]$. The equivalence in
question arises via left Kan extension and restriction along the
inclusion $\mathbb{F} \rightarrow \cat{Set}$; and transporting the
composition monoidal structure on finitary endofunctors across it
yields the so-called \emph{substitution} monoidal structure on
$[\mathbb{F}, \cat{Set}]$, with tensor and unit given by
$(A \otimes B)(n) = \smash{\int^k Ak \times (Bn)^k}$ and $I(n) = n$.
Henceforth, we re-define $\F$ to be this monoidal category. Having
done so, we see that a general $\F$-category $\C$ involves objects
$X,Y,\dots$, hom-objects $\C(X,Y) \in [\mathbb{F}, \cat{Set}]$, and
composition and identities notated as follows:
\begin{equation}
  \label{eq:37}
  \begin{aligned}
    \textstyle\int^k \C(Y,Z)(k) \times \C(X,Y)(n)^k & \rightarrow
    \C(X,Z)(n) & I(n) & \rightarrow
    \C(X,X)(n) \\
    [g, f_1, \dots, f_k] & \mapsto g \circ (f_1, \dots, f_k) & i &
    \mapsto \pi_i\rlap{ .}
  \end{aligned}
\end{equation}
Note moreover that, since the unit $I \in [\mathbb{F}, \cat{Set}]$ is
represented by $1$, the arrows $X \rightarrow Y$ in the underlying
ordinary category $\C_0$ of $\C$ are the elements of $\C(X,Y)(1)$.

\begin{proof}[Proof of Proposition~\ref{prop:11}]
  Suppose $\C$ is an absolute-tensored $\F$-category. Then for each
  $X \in \C$ and $n \in \mathbb{N}$, we have $y_n \cdot X \in \C$ and
  a unit map $y_n \rightarrow \C(X, y_n \cdot X)$, or equally, an
  element $u \in \C(X, y_n \cdot X)(n)$, rendering each~\eqref{eq:16}
  invertible. When $U = y_1$, the function~\eqref{eq:16} is given, to
  within isomorphism, by
  \begin{equation}
    \label{eq:39}
    \begin{aligned}
      \C(y_n \cdot X,Y)(1) & \rightarrow \C(X,Y)(n) \\
      f & \mapsto f \circ (u)\rlap{ ;}
    \end{aligned}
  \end{equation}
  thus, when $Y = X$ we obtain elements
  $p_1, \dots, p_n \in \C(y_n \cdot X, X)(1)$ with
  $p_i \circ (u) = \pi_i$ in $\C(X,X)(n)$. It follows by the
  $\F$-category axioms that
  $(u \circ (p_1, \dots, p_n)) \circ (u) = u$, and so, by~\eqref{eq:39}
  with $Y = y_n \cdot X$, that
  $u \circ (p_1, \dots, p_n) = \id_{y_n \cdot X}$ in
  $\C(y_n \cdot X, y_n \cdot X)(1)$.

  We claim that the maps $p_i \colon y_n \cdot X \to X$ in $\C_0$
  exhibit $y_n \cdot X$ as the $n$-fold power $X^n$. Indeed, given
  $g_1, \dots, g_n \colon Z \rightarrow X$ in $\C_0$, we define
  $g \colon Z \rightarrow y_n \cdot X$ by
  $g = u \circ (g_1, \dots, g_n)$, and now $p_i \circ (u) = \pi_i$
  implies $p_i \circ (g) = g_i$. Moreover, if
  $h \colon Z \rightarrow y_n \cdot X$ satisfies $p_i \circ (h) = g_i$
  then
  $g = u \circ (p_1 \circ (h), \dots, p_n \circ (h)) = (u \circ (p_1,
  \dots, p_n)) \circ h = h$.

  This proves the first claim. The second follows easily from the fact
  that any $\F$-functor preserves absolute tensors, and so we have a
  $2$-functor $\F\text-\cat{CAT}_\mathrm{abs} \rightarrow \cat{FPOW}$.
  Finally, to show this is a $2$-equivalence we construct an explicit
  pseudoinverse. To each category $\D$ with finite powers, we associate
  the $\F$-category $\underline \D$ with objects those of $\D$,
  with hom-objects $\underline \D(X, Y) = \D(X^{(\thg)}, Y)$, with
  composition operations~\eqref{eq:37} obtained using the universal
  property of power, and with identity elements $\pi_i$ given by power projections.
  This $\underline \D$ is absolute-tensored on taking the tensor of
  $X \in \underline \D$ by $y_n$ to be $X^n$, with unit element
  $1_{X^n} \in \underline \D(X, X^n)(n) = \D(X^n, X^n)$. It is now
  straightforward to extend the assignation $\D \mapsto \underline \D$
  to a $2$-functor
  $\cat{FPOW} \rightarrow \F\text-\cat{CAT}_\mathrm{abs}$, and to show
  that this is pseudoinverse to the underlying ordinary category
  $2$-functor.
\end{proof}

In particular, if $J \colon \mathbb{F}^\mathrm{op} \rightarrow \L$ is
a Lawvere theory, then we can view both $\mathbb{F}^\mathrm{op}$ and
$\L$ as $\F$-categories, and $J$ as an $\F$-functor between them.
Defining a \emph{Lawvere $\F$-category} to be a representable
$\F$-category $\L$ equipped with an identity-on-objects, strict
absolute-tensor-preserving $\F$-functor
$\mathbb{F}^\mathrm{op} \rightarrow \L$, it follows easily that:

\begin{prop}
  \label{prop:14}
  The category of Lawvere $\F$-categories is equivalent to the
  category of Lawvere theories (where maps in each case are commuting
  triangles under $\mathbb{F}^\mathrm{op}$).
\end{prop}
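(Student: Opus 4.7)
The plan is to deduce this directly from Proposition~\ref{prop:11}. Lawvere theories and Lawvere $\F$-categories are defined in strictly parallel terms: each is an object of $\cat{FPOW}$ (respectively $\F\text-\cat{CAT}_\mathrm{abs}$) equipped with an identity-on-objects, strict-structure-preserving morphism out of $\mathbb{F}^\mathrm{op}$. So the 2-equivalence $\F\text-\cat{CAT}_\mathrm{abs} \simeq \cat{FPOW}$ should restrict to the required equivalence, once it is verified that it takes $\mathbb{F}^\mathrm{op}$ to $\mathbb{F}^\mathrm{op}$ in a way compatible with the identity-on-objects and strictness requirements.

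First I would check that the pseudoinverse $\D \mapsto \underline \D$ constructed in the proof of Proposition~\ref{prop:11} sends the ordinary category $\mathbb{F}^\mathrm{op}$, equipped with its canonical finite powers, to precisely the $\F$-category $\mathbb{F}^\mathrm{op}$ implicit in the definition of a Lawvere $\F$-category: since $\underline{(\thg)}$ acts as the identity on objects and sets the tensor $y_n \cdot m$ to be the chosen power $m \cdot n$, this is routine bookkeeping. Now, given a Lawvere theory $J \colon \mathbb{F}^\mathrm{op} \to \L$, the $\F$-functor $\underline J \colon \underline{\mathbb{F}^\mathrm{op}} \to \underline \L$ is identity-on-objects because $J$ is, and it strictly preserves absolute tensors because the tensors on either side are, by construction, the chosen finite powers of the underlying ordinary category, which $J$ preserves strictly. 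Conversely, given a Lawvere $\F$-category $J \colon \mathbb{F}^\mathrm{op} \to \L$, its underlying ordinary functor $J_0$ lands in $\cat{FPOW}$ by Proposition~\ref{prop:11} and inherits identity-on-objects-ness and strict finite-power-preservation from the corresponding properties of $J$.

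Morphisms on both sides are commuting triangles under $\mathbb{F}^\mathrm{op}$, and commutation of such triangles is preserved by the 2-functors $\underline{(\thg)}$ and $(\thg)_0$; together with the unit and counit equivalences of Proposition~\ref{prop:11} this yields the required bijection on morphisms. I expect the principal delicacy to lie in the strictness bookkeeping: because Proposition~\ref{prop:11} supplies only a pseudoinverse, one must fix once and for all a choice of finite powers in $\mathbb{F}^\mathrm{op}$ that agrees on the nose with the absolute tensors produced by $\underline{(\thg)}$, so that strictness on either side translates exactly to strictness on the other, and so that the identity-on-objects condition on the structural morphism out of $\mathbb{F}^\mathrm{op}$ is preserved in both directions.
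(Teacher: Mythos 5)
Your proposal is correct and follows the same route the paper intends: the paper offers no separate proof of Proposition~\ref{prop:14}, stating only that it ``follows easily'' from the $2$-equivalence of Proposition~\ref{prop:11}, and your argument is precisely the fleshing-out of that deduction, including the genuinely relevant care about strictness and the identity-on-objects condition being transported on the nose rather than up to isomorphism.
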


Using Propositions~\ref{prop:13} and~\ref{prop:14}, we may now
re-express the the monad--theory correspondence~\eqref{eq:8} in
$\F$-categorical terms as an equivalence between one-object
$\F$-categories and Lawvere $\F$-categories. We obtain this using the
process of \emph{free completion} under absolute tensors---a
description of which can be derived
from~\cite{Betti1982Cauchy-completion}.

\begin{prop}
  \label{prop:12}
  Let $\V$ be a monoidal category and let $\V_d \subset \V$ be a
  subcategory equivalent to the full subcategory of objects with left
  duals in $\V$. The free completion under absolute tensors of a
  $\V$-category $\C$ is given by the $\V$-category $\bar \C$ with:
  \begin{itemize}
  \item Objects $V \cdot X$, where $X \in \C$ and $V \in \V_d$;
  \item Hom-objects
    $\bar \C(V \cdot X, W \cdot Y) = W \otimes \C(X,Y) \otimes V^\ast$
    (for $V^\ast$ a left dual for $V$);
  \item Composition built from composition in $\C$ and the counit maps
    $\varepsilon \colon W^\ast \otimes W \rightarrow I$;
  \item Identities built from the unit maps
    $\eta \colon I \rightarrow V \otimes V^\ast$ and identities in
    $\C$.
  \end{itemize}
\end{prop}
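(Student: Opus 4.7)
The plan is to verify, in sequence, four claims: that $\bar\C$ is a well-defined $\V$-category; that there is a canonical fully faithful $\V$-functor $J\colon\C\to\bar\C$; that $\bar\C$ admits all absolute tensors; and that $(\bar\C, J)$ satisfies the universal property of the free completion under absolute tensors.

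For the first claim, I would unpack the composition $(U \otimes \C(Y,Z) \otimes W^\ast) \otimes (W \otimes \C(X,Y) \otimes V^\ast) \to U \otimes \C(X,Z) \otimes V^\ast$ as reassociation bringing $W^\ast \otimes W$ adjacent, the counit $\varepsilon\colon W^\ast\otimes W\to I$, and the composition $\mu_\C$ of $\C$ on the remaining pair of hom-factors; the identity at $V\cdot X$ is $I\xrightarrow{\eta} V \otimes V^\ast \cong V \otimes I \otimes V^\ast \xrightarrow{V \otimes \id_X \otimes V^\ast} V \otimes \C(X,X) \otimes V^\ast$. Associativity and the unit laws for $\bar\C$ then reduce to the $\V$-category axioms of $\C$ combined with the triangle identities for $V^\ast \dashv V$; crucially, no symmetry of $\otimes$ is invoked, since $\varepsilon$ and $\mu_\C$ act on disjoint contiguous blocks.

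For the fully faithful embedding, set $JX := I\cdot X$ (noting $I$ is its own left dual, with identity unit and counit), with hom-component the canonical iso $\C(X,Y) \cong I \otimes \C(X,Y) \otimes I$. For absolute tensors in $\bar\C$, I would take the tensor of $V\cdot X$ by $U \in \V_d$ to be $(U \otimes V)\cdot X$, well-formed because $U \otimes V$ then has left dual $V^\ast \otimes U^\ast$; the unit map $U \to (U\otimes V) \otimes \C(X,X) \otimes V^\ast$ is assembled from $\eta_U$ and $\id_X$, and the bijection~\eqref{eq:16} follows formally from the adjunction $-\otimes U \dashv -\otimes U^\ast$ together with the definitional form of the hom-objects of $\bar\C$. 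By Proposition~\ref{prop:2}, these exhaust the absolute tensors.

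The hard part is the universal property. Given a $\V$-functor $F\colon\C\to\D$ with $\D$ absolute-tensored, $\bar F$ is forced on objects to be $\bar F(V\cdot X) := V \cdot FX$ (which exists in $\D$ since $V \in \V_d$), while its hom-component is the composite
\[
W\otimes \C(X,Z)\otimes V^\ast \xrightarrow{W\otimes F_{X,Z}\otimes V^\ast} W\otimes \D(FX,FZ)\otimes V^\ast \xrightarrow{\Phi} \D(V\cdot FX, W\cdot FZ),
\]
where $\Phi$ is built by first using the unit $u_W\colon W \to \D(FZ, W\cdot FZ)$ and composition in $\D$ to produce a map into $\D(FX, W\cdot FZ)\otimes V^\ast$, then applying the iso $\D(FX, W\cdot FZ) \otimes V^\ast \cong \D(V\cdot FX, W\cdot FZ)$ coming from the tensor condition~\eqref{eq:16} together with the adjunction $-\otimes V \dashv -\otimes V^\ast$. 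Checking $\V$-functoriality and tensor-preservation of $\bar F$ is the computationally substantive step, resting on the triangle identities and the $\V$-category axioms of $\D$; uniqueness up to $\V$-natural isomorphism is then automatic, since each object of $\bar\C$ is canonically the absolute tensor by some $V \in \V_d$ of an object in the image of $J$, so any tensor-preserving extension agreeing with $F$ along $J$ is determined on both objects and hom-objects.
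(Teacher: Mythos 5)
The paper does not actually prove Proposition~\ref{prop:12}: it only remarks that the description ``can be derived from'' Betti--Carboni, so there is no in-text argument to match yours against. Your direct verification is the right shape and is essentially the same dual-pair technology the paper does spell out later, in the bicategorical setting, as Proposition~\ref{prop:17} and the $\int\dashv\Gamma$ analysis: the composition via $\varepsilon\colon W^\ast\otimes W\to I$ on a contiguous block (correctly needing no symmetry), the identification of $V\cdot X$ as the tensor of $JX=I\cdot X$ by $V$, and the forced formula $\bar F(V\cdot X)=V\cdot FX$ with hom-component obtained from the tensor bijection and the adjunction $({-})\otimes V\dashv({-})\otimes V^\ast$. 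What your route buys is self-containedness; what the paper's citation (and its own Proposition~\ref{prop:17}) buys is that the universal property becomes almost formal, since the data $(u,u^\ast)$ exhibiting $V\cdot X$ as a tensor are equations in $\bar\C$ preserved by \emph{every} $\V$-functor, so any $\bar F$ agreeing with $F$ on $J$-images is determined without further computation.

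Three small points to tighten. First, the unit map $U\to (U\otimes V)\otimes\C(X,X)\otimes V^\ast$ is built from the duality unit $\eta_V\colon I\to V\otimes V^\ast$ (inserted between $U$ and the right-hand $V^\ast$) together with the identity element $I\to\C(X,X)$; writing ``$\eta_U$'' here is a slip, as $\eta_U$ lands in $U\otimes U^\ast$ and plays no role in this map. Second, $U\otimes V$ need not lie in $\V_d$ on the nose, only up to isomorphism, so the tensor of $V\cdot X$ by $U$ should be taken to be $W\cdot X$ for some $W\in\V_d$ with $W\cong U\otimes V$; similarly $I$ itself may only be isomorphic to an object of $\V_d$. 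Third, the universal property is $2$-categorical: besides existence of $\bar F$ and uniqueness up to isomorphism, you should note that restriction along $J$ is fully faithful on $\V$-transformations, which again follows because each object of $\bar\C$ is an absolute tensor of an object in the image of $J$ and $\V$-naturality at $V\cdot X$ is determined by $\V$-naturality at $X$ via the unit $u$. None of these affects the correctness of the overall strategy.
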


Taking $\V = \F$ and $\V_d$ to be the full subcategory on the $y_n$'s,
we thus arrive at:

\begin{prop}
  \label{prop:15}
  The category of one-object $\F$-categories is equivalent to the
  category of Lawvere $\F$-categories.
\end{prop}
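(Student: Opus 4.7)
The plan is to set up the equivalence via the free-completion construction of Proposition~\ref{prop:12}. In one direction, I would send a one-object $\F$-category $\T$ (with unique object $\ast$) to its free absolute-tensor completion $\bar\T$; in the other, I would send a Lawvere $\F$-category $J \colon \mathbb{F}^{\op} \to \L$ to the one-object $\F$-category $\L_0$ with unique object $J(1)$ and endo-hom $\L(J(1),J(1))$. By the discussion after Proposition~\ref{prop:2}, the objects of $\F$ admitting left duals are exactly (up to isomorphism) the representables $y_n$, which corresponds on the endofunctor side to $(-)^n$ having left adjoint $n \cdot (-)$; so I would apply Proposition~\ref{prop:12} with $\V_d$ taken to be the full subcategory of $\F$ on $\{y_n : n \in \mathbb{N}\}$.

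Having made this choice, the objects of $\bar\T$ are precisely $y_n \cdot \ast$ indexed by $n \in \mathbb{N}$, and $\bar\T$ is absolute-tensored by construction. I would then exhibit $\bar\T$ as a Lawvere $\F$-category by producing an identity-on-objects $\F$-functor $\mathbb{F}^{\op} \to \bar\T$ sending $n \mapsto y_n \cdot \ast$, whose action on hom-objects is induced by the unit $I \to \T(\ast,\ast)$ together with the $\eta, \varepsilon$ data of Proposition~\ref{prop:12}; strict preservation of absolute tensors will follow from the very definition of the completion. The easy direction $\Psi \circ \Phi \cong \id$ is then immediate: since $y_1 = I$ is the monoidal unit, it is self-dual, so $\bar\T(y_1 \cdot \ast, y_1 \cdot \ast) = y_1 \otimes \T(\ast,\ast) \otimes y_1^{\ast} \cong \T(\ast,\ast)$, recovering the original monoid structure.

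The main obstacle will be verifying $\Phi \circ \Psi \cong \id$. Given a Lawvere $\F$-category $\L$, I would use the universal property of $\bar{\L_0}$ as free completion to obtain a canonical absolute-tensor-preserving $\F$-functor $\bar{\L_0} \to \L$ extending the inclusion of the endo-hom at $J(1)$. Essential surjectivity is clear because $J$ is identity-on-objects, so every object of $\L$ is $J(n)$, and strict preservation of absolute tensors forces $J(n) = y_n \cdot J(1)$; fully-faithfulness follows because the universal property of absolute tensors computes $\L(y_n \cdot J(1), y_m \cdot J(1)) \cong y_m \otimes \L(J(1),J(1)) \otimes y_n^{\ast}$, matching the hom-objects of $\bar{\L_0}$ given by Proposition~\ref{prop:12}. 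Compatibility of this isomorphism with the structural $\F$-functors from $\mathbb{F}^{\op}$ will be a routine check: both sides agree on the generating object $1$, and hence everywhere by strict preservation of absolute tensors. Functoriality in morphisms, and the fact that $\Phi$ and $\Psi$ extend to genuine functors of the categories in question, will be straightforward consequences of the universal property of the completion.
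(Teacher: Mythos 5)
Your proposal is correct and follows essentially the same route as the paper: the forward direction is the free completion under absolute tensors (the paper phrases the structure map $\mathbb{F}^{\op}\to\bar\T$ as the completion applied to the unique $\F$-functor $\I\to\T$, using that $\bar\I = \mathbb{F}^{\op}$, which is exactly the identity-on-objects functor you construct by hand), and the inverse is likewise the one-object sub-$\F$-category on $J1$. Your additional verifications of the two composites, via self-duality of $y_1 = I$ and the hom-object formula of Proposition~\ref{prop:12}, are the details the paper leaves implicit and they check out.
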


\begin{proof}
  The free completion under absolute tensors of the unit $\F$-category
  $\I$ is $\mathbb{F}^\mathrm{op}$; whence each one-object
  $\F$-category $\C$ yields a Lawvere $\F$-category on applying
  completion under absolute tensors to the unique $\F$-functor
  $\I \rightarrow \C$. In the other direction, we send a Lawvere
  $\F$-category $J \colon \mathbb{F}^\mathrm{op} \rightarrow \L$ to
  the one-object sub-$\F$-category of $\L$ on $J1$.
\end{proof}

The conjunction of Propositions~\ref{prop:13},~\ref{prop:14}
and~\ref{prop:15} now yields the equivalence on the top row
of~\eqref{eq:8}. More pedantically, it yields \emph{an} equivalence,
which we should check is in fact the usual one:

\begin{exa}
  \label{ex:4}
  Let $T$ be a finitary monad on $\cat{Set}$, and let $\T$ be the
  corresponding one-object $\F$-category; thus, $\T$ has a single
  object $X$ with $\T(X, X)(n) = Tn$, and composition and identities
  coming from the monad structure of $T$. The free completion
  $\bar \T$ of $\T$ under absolute tensors has objects $y_n \cdot X$
  for $n \in \mathbb{N}$---or equally, just natural numbers---and
  hom-objects given by
  \begin{equation}\label{eq:33}
    \bar \T(n, m) = y_m \otimes \T(X,X) \otimes
    (y_n)^\ast \cong (T(n \times \thg))^m \rlap{ .}
  \end{equation}
  The underlying ordinary category of $\bar \T$ is thus the category
  $\L$ with natural numbers as objects, and $\L(n,m) = (Tn)^m$.
  Similarly, the underlying strict finite-power-preserving functor of
  $\mathbb{F}^\mathrm{op} \rightarrow \bar \T$ is given by
  postcomposition with the unit of $T$, and so is precisely the Lawvere
  theory corresponding to the finitary monad $T$.
\end{exa}

To reconstruct the whole pseudocommutative triangle in~\eqref{eq:8},
we need the following result, which combines Propositions~2.5 and
4.4~of~\cite{Garner2014Lawvere}; we omit the proof for now,\pagebreak~ though
note that the corresponding generalisations over a general locally
finitely presentable base will be proven as Propositions~\ref{prop:10}
and~\ref{prop:32} below.

\begin{prop}
  \label{prop:16}
  Let $\S$ denote the $\F$-category corresponding to the
  category-with-finite-powers $\cat{Set}$. The embeddings of finitary
  monads and Lawvere theories into $\F$-categories fit into
  pseudocommutative triangles:
  \begin{equation*}
    \cd[@!C@C-4em]{
      \cat{Mnd}_f(\cat{Set})^\mathrm{op}
      \ar[dr]_-{\cat{Alg}(\thg)}\ar[rr]^(0.5){} & \twosim{d} & \F\text-\cat{CAT}^\mathrm{op}
      \ar[dl]^-{\F\text-\cat{CAT}(\thg, \S)} \\ & \cat{CAT} &
    } \qquad \text{and} \qquad 
    \cd[@!C@C-3em]{
      \cat{Law}^\mathrm{op}
      \ar[dr]_-{\cat{Mod}(\thg)}\ar[rr]^(0.5){} & \twosim{d} & \F\text-\cat{CAT}^\mathrm{op}
      \ar[dl]^-{\F\text-\cat{CAT}(\thg, \S)}\rlap{ .} \\ & \cat{CAT} &
    }
  \end{equation*}
\end{prop}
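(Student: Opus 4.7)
The plan is to establish each triangle by computing $\F\text-\cat{CAT}(\thg, \S)$ along the embeddings of Propositions~\ref{prop:13} and~\ref{prop:14} and identifying the result with the ordinary algebra and model functors. Since both sides are $2$-functors landing in $\cat{CAT}$, it suffices to produce a pointwise equivalence on objects of the source and then verify that the naturality squares commute up to canonical isomorphism.

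For the monad triangle, let $\T$ be the one-object $\F$-category corresponding to a finitary monad $T$, with unique object $X$ and hom $\T(X,X) = T \in [\mathbb{F}, \cat{Set}]$. An $\F$-functor $F \colon \T \to \S$ is determined by a set $A = FX$ together with a monoid homomorphism $T \to \S(A,A)$ in $[\mathbb{F}, \cat{Set}]$, while an $\F$-natural transformation $F \Rightarrow G$ amounts to a function $A \to B$ (where $B = GX$) intertwining the two induced $T$-actions. From the construction $\underline\D$ in the proof of Proposition~\ref{prop:11}, $\S(A,A)(n) = \cat{Set}(A^n, A)$ under substitution-composition; and since $T$ is finitary, $TA = \int^{n \in \mathbb{F}} Tn \times A^n$, so uncurrying shows that natural transformations $T \Rightarrow \cat{Set}(A^{-}, A)$ biject with maps $TA \to A$, with the monoid axioms passing to the $T$-algebra axioms and the naturality condition to the $T$-algebra homomorphism condition. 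This yields an isomorphism $\F\text-\cat{CAT}(\T, \S) \cong \cat{Alg}(T)$; pseudonaturality in $T$ is routine, since precomposition by the $\F$-functor $\T' \to \T$ induced by a monad map $T' \to T$ matches strictly with restriction of algebras.

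For the Lawvere triangle, Proposition~\ref{prop:11} directly supplies an equivalence $\F\text-\cat{CAT}(\L, \S) \simeq \cat{FPOW}(\L_0, \cat{Set})$, and the latter is by definition $\cat{Mod}(\L)$ for $\L$ a Lawvere theory---since a finite-power-preserving functor $\L \to \cat{Set}$ automatically has finite-power-preserving restriction along $J \colon \mathbb{F}^\mathrm{op} \to \L$. Pseudonaturality in $\L$ again reduces to matching precomposition with restriction of models. I expect the only non-routine aspect, and the reason the statement reads pseudocommutative rather than strictly commutative, to lie in the bookkeeping for this second triangle: the assignment $\L \mapsto \F\text-\cat{CAT}(\L, \S)$ factors through the chosen pseudoinverse $\D \mapsto \underline\D$ of Proposition~\ref{prop:11}, so matching it up with $\cat{Mod}(\thg)$ requires a coherent choice of isomorphism at each stage rather than an equality. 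Otherwise both halves of the argument reduce to direct unpacking given the technical work already in place.
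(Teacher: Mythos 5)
Your proposal is correct, and it matches the approach the paper intends: the paper omits a proof of this proposition and defers to the general versions (Propositions~\ref{prop:10} and~\ref{prop:32}), whose arguments your two halves specialise exactly --- your coend/uncurrying identification of $\F$-functors $\T \to \S$ with $T$-algebras is the one-object instance of the closure-adjunction transposition in Proposition~\ref{prop:9}, and your use of the hom-equivalence from Proposition~\ref{prop:11} for the Lawvere triangle parallels the use of $\Gamma$ being an equivalence on homs in Proposition~\ref{prop:32}. Your closing remark correctly locates where strict commutativity degrades to pseudonaturality.
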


Given this, to obtain the compability with semantics in~\eqref{eq:8},
it suffices to show that, for any one-object $\F$-category $\C$ with
completion under absolute tensors $\bar \C$, there is an equivalence
between the category of $\F$-functors $\C \rightarrow \S$ and the
category of $\F$-functors $\bar \C \rightarrow \S$. But since by
construction $\S$ is absolute-tensored, this follows directly from the
universal property of free completion under absolute tensors.

\section{Ingredients for generalisation}
\label{sec:ingr-gener}

In the rest of the paper, we extend the analysis of the previous
section to deal with the finitary monad--Lawvere correspondence over
an arbitrary locally finitely presentable (lfp) base. In this section,
we set up the necessary background for this: first recalling
from~\cite{Nishizawa2009Lawvere} the details of the generalised
finitary monad--Lawvere theory correspondence, and then recalling
from~\cite{Walters1981Sheaves, Street1983Enriched} some necessary
aspects of bicategory-enriched category theory. We will assume
familiarity with the basic theory of lfp categories as found, for
example, in~\cite{Adamek1994Locally,Gabriel1971Lokal}.

\subsection{The monad--theory correspondence over a general lfp base}
\label{sec:lawvere-a-theories}

In extending the monad--theory correspondence~\eqref{eq:8} from
$\cat{Set}$ to a given lfp category $\A$, one side of the
generalisation is apparent: we simply replace finitary monads on
$\cat{Set}$ by finitary monads on $\A$. On the other side, the
appropriate generalisation of Lawvere theories is given by the
\emph{Lawvere $\A$-theories} of~\cite{Nishizawa2009Lawvere}:

\begin{defi}
  \label{def:15}
  A \emph{Lawvere $\A$-theory} is a small category $\L$ together with
  an identity-on-objects, finite-limit-preserving functor
  $J \colon \af \rightarrow \L$. A \emph{morphism} of Lawvere
  $\A$-theories is a functor $\L \rightarrow \L'$ commuting with the
  maps from $\af$. A \emph{model} for a Lawvere $\A$-theory is a
  functor $F \colon \L \rightarrow \cat{Set}$ for which $FJ$ preserves
  finite limits.
\end{defi}

Here, and in what follows, we write $\A_f$ for a small subcategory
equivalent to the full subcategory of finitely presentable objects in
$\A$. Note that, while $\af$ has all finite limits, we do not assume
the same of $\L$; though, of course, it will admit all finite limits
of diagrams in the image of $J$, and so in particular all finite
products.

\begin{rem}
  \label{rk:2}
  Our definition of Lawvere $\A$-theory alters that
  of~\cite{Lack2009Gabriel-Ulmer,Nishizawa2009Lawvere} by dropping the
  requirement of \emph{strict} finite limit preservation. However,
  this apparent relaxation does not in fact change the notion of
  theory. To see why, we must consider carefully what this strictness
  amounts to, which is delicate, since $\L$ need not be finitely
  complete. The correct interpretion is as follows: we fix some choice
  of finite limits in $\af$, and also assume that, for each finite
  diagram $D \colon \I \rightarrow \af$, the category $\L$ is endowed
  with a choice of limit for $JD$ (in particular, because $J$ is the
  identity on objects, this equips $\L$ with a choice of finite
  products). We now require that $J \colon \af \rightarrow \L$ send
  the chosen limits in $\af$ to the chosen limits in $\L$. However, in
  this situation, the choice of limits in $\L$ is uniquely determined
  by that in $\af$ \emph{so long as} $J$ preserves finite limits in
  the non-algebraic sense of sending limit cones to limit cones. Thus,
  if we interpret the preservation of finite limits in
  Definition~\ref{def:15} in this non-algebraic sense, then our notion
  of Lawvere $\A$-theory agrees
  with~\cite{Lack2009Gabriel-Ulmer,Nishizawa2009Lawvere}.

  Note also that our definition of model for a Lawvere $\A$-theory is
  that of~\cite{Lack2009Gabriel-Ulmer}, rather than that
  of~\cite{Nishizawa2009Lawvere}: the latter paper defines a model to
  comprise $A \in \A$ together with a functor
  $F \colon \L \rightarrow \cat{Set}$ such that
  $FJ = \A(\thg, A) \colon \af \rightarrow \cat{Set}$. The equivalence
  of these definitions follows since $\A$ is equivalent to the
  category $\cat{FL}({\A_f}^\mathrm{op}, \cat{Set})$ of finite-limit-preserving functors
  $\af \rightarrow \cat{Set}$ via the assignation
  $A \mapsto \A(\thg, A)$.
\end{rem}

Even bearing the above remark in mind, it is not immediate that
Lawvere $\cat{Set}$-theories and their models coincide with Lawvere
theories and their models in the previous sense; however, this was
shown to be so in~\cite[Theorem~2.4]{Nishizawa2009Lawvere}. The
correctness of these notions over a general base is confirmed by the
main result of~\cite{Nishizawa2009Lawvere}, which we restate here as:

\begin{thm}\label{thm:4}
  \cite[Corollary 5.2]{Nishizawa2009Lawvere}
  The category of finitary monads on $\A$ is equivalent to the
  category of Lawvere $\A$-theories; moreover this equivalence is
  compatible with the semantics in the sense displayed in~\eqref{eq:7}.
\end{thm}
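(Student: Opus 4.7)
The plan is to construct explicit functors in both directions, verify they are mutually pseudoinverse, and check that the triangle~\eqref{eq:7} commutes up to pseudonatural equivalence. To each finitary monad $T$ on $\A$, I would associate the Lawvere $\A$-theory $J_T \colon \af \to \L_T$, where $\L_T$ has the objects of $\af$ and hom-sets $\L_T(A,B) = \A(B,TA)$, with composition inherited from the opposite of the Kleisli category of $T$, and $J_T$ sends a morphism $f \colon A \to B$ of $\af$ to $\eta_A \circ f$. This $J_T$ is identity-on-objects, and preserves finite limits since the free functor $\A \to \cat{Kl}(T)$ is a left adjoint and $\A_f \hookrightarrow \A$ preserves finite colimits. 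In the opposite direction, to each Lawvere $\A$-theory $J$, I would associate the monad arising from the forgetful functor $U_\L \colon \cat{Mod}(\L) \to \A$, where the target is identified with $\fl(\af,\cat{Set})$ and $U_\L$ sends $F$ to $FJ$.

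To establish that these assignments are mutually pseudoinverse, I would first show that $\cat{Mod}(\L_T) \simeq \cat{Alg}(T)$ canonically over $\A$: under $\A \simeq \fl(\af,\cat{Set})$, a functor $F \colon \L_T \to \cat{Set}$ with $FJ_T$ finite-limit-preserving is exactly an object of $\A$ equipped with a $T$-algebra structure, the Kleisli hom-sets encoding the algebra action as the assignment $\A(B,TA) \to \cat{Set}(FB,FA)$. This identification also makes the semantic triangle in~\eqref{eq:7} commute essentially by construction. The converse identification $\L_{T_\L} \simeq \L$ over $\af$ follows by recovering $\L$ as the full subcategory of $\cat{Mod}(\L)^\mathrm{op}$ on the free models generated by objects of $\af$, using that these free models are dense in $\cat{Mod}(\L)$.

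The principal obstacle will be to show that $U_\L \colon \cat{Mod}(\L) \to \A$ is monadic and admits a finitary left adjoint. A left adjoint exists because $\cat{Mod}(\L)$ is closed in $[\L,\cat{Set}]$ under filtered colimits---the finite-limit-preservation condition on $FJ$ being itself a finitary condition---and under finite limits, so the adjoint functor theorem applies; monadicity then follows from Beck's criterion applied to $U_\L$, and finitariness of the resulting monad from the preservation of filtered colimits by $U_\L$. The delicate point, as noted in Remark~\ref{rk:2}, is that $\L$ need not admit arbitrary finite limits but only those of $J$-diagrams, so every universal property in $\L$ must be framed relative to the limits $J$ supplies, and strict-versus-non-strict preservation of limits must be handled carefully throughout.
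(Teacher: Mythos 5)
Your proposal follows essentially the same route as the paper's (sketched) proof: the Kleisli-style construction $T \mapsto (J_T \colon \af \to \L_T)$ with $\L_T(A,B)=\A(B,TA)$ in one direction, and finitary monadicity of the forgetful functor $\cat{Mod}(\L)\to\fl(\af,\cat{Set})\simeq\A$ in the other, followed by the check that the two processes are pseudoinverse compatibly with semantics. The additional details you supply (closure of $\cat{Mod}(\L)$ under filtered colimits and limits, identification of $\L$ with the opposite of the full subcategory of free models on finitely presentable objects) are correct and fill in exactly the steps the paper delegates to Nishizawa--Power.
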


\begin{proof}[Proof (sketch)]
  For a finitary monad $T$ on $\A$, let $\L_T$ be the category with
  objects those of ${\A_f}$, hom-sets $\L_T(A,B) = \A(B,TA)$, and the
  usual Kleisli composition; now the identity-on-objects
  $J_T \colon {\A_f}^\mathrm{op} \rightarrow \L_T$ sending
  $f \in \A_f(B,A)$ to $\eta_A \circ f \in \L_T(A,B)$ is a Lawvere
  $\A$-theory. Conversely, if
  $J \colon {\A_f}^\mathrm{op} \rightarrow \L$ is a Lawvere
  $\A$-theory, then the composite of the evident forgetful functor
  $\cat{Mod}(\L) \rightarrow \cat{FL}({\A_f}^\mathrm{op}, \cat{Set})$
  with the equivalence
  $\cat{FL}({\A_f}^\mathrm{op}, \cat{Set}) \rightarrow \A$ is
  finitarily monadic, and so gives a finitary monad $T_\L$ on $\A$.
  With some care one may now show that these processes are
  pseudoinverse in a manner which is compatible with the semantics.
\end{proof}

\subsection{Bicategory-enriched category theory}
\label{sec:bicat-enrich-categ}

We now recall some basic definitions from the theory of categories
enriched over a bicategory as developed in~\cite{Walters1981Sheaves,
  Street1983Enriched}.

\begin{defi}
  \label{def:1}
  Let $\W$ be a bicategory whose $1$-cell composition and identities
  we write as $\comp$ and $\unit_x$ respectively. A
  \emph{$\W$-category} $\C$ comprises:
  \begin{itemize}
  \item A set $\ob \C$ of objects;
  \item For each $X \in \ob \C$, an \emph{extent}
    $\epsilon X \in \ob \W$;
  \item For each $X, Y \in \ob \C$, a hom-object\footnote{There are
      two conventions in the literature: we either take $\C(X,Y) \in
      \W(\epsilon X, \epsilon Y)$, as in~\cite{Gordon1997Enrichment}
      for example, or we take $\C(X,Y) \in
      \W(\epsilon Y, \epsilon X)$ as in~\cite{Street1983Enriched}. We
      have chosen the former convention here, and have 
      adjusted results from the literature where necessary to conform
      with this.}
    $\C(X,Y) \in \W(\epsilon X, \epsilon Y)$;
  \item For each $X, Y, Z \in \ob \C$, composition maps in
    $\W(\epsilon X, \epsilon Z)$ of the form
    \begin{equation*}
      \mu_{xyz} \colon \C(Y,Z) \comp \C(X,Y) \rightarrow \C(X,Z) \rlap{ ;}
    \end{equation*}
  \item For each $X \in \ob \C$ an identities map in
    $\W(\epsilon X, \epsilon X)$ of the form
    \begin{equation*}
      \iota_X \colon \unit_{\epsilon X} \rightarrow \C(X,X) \rlap{ ;}
    \end{equation*}
  \end{itemize}
  subject to associativity and unitality axioms. A \emph{$\W$-functor}
  $F \colon \C \rightarrow \D$ between $\W$-categories comprises an
  extent-preserving assignation on objects, together with maps
  $F_{XY} \colon \C(X,Y) \rightarrow \D(FX,FY)$ in
  $\W(\epsilon X, \epsilon Y)$ for each $X,Y \in \ob \C$, subject to the
  two usual functoriality axioms. Finally, a \emph{$\W$-transformation}
  $\alpha \colon F \Rightarrow G \colon \C \rightarrow \D$ between
  $\W$-functors comprises maps
  $\alpha_x \colon \unit_{\epsilon X} \rightarrow \D(FX,GX)$ in
  $\W(\epsilon X, \epsilon X)$ for each $X \in \ob \C$ obeying a
  naturality axiom. We write $\W\text-\cat{CAT}$ for the $2$-category of
  $\W$-categories, $\W$-functors and $\W$-transformations.
\end{defi}

Note that, if $\W$ is the one-object bicategory corresponding to a
monoidal category $\V$, then we re-find the usual definitions of
$\V$-category, $\V$-functor and $\V$-transformation. A key difference
in the general bicategorical situation is that a $\W$-category does
not have a single ``underlying ordinary category'', but a whole family
of them:

\begin{defi}
  \label{def:7}
  For any $x \in \W$, we write $\I_x$ for the $\W$-category with a
  single object $\ast$ of extent $x$ and with
  $\I_A(\ast, \ast) = \unit_x$, and write $(\thg)_x$ for the
  representable $2$-functor
  $\W\text-\cat{CAT}(\I_x, \thg) \colon \W\text-\cat{CAT} \rightarrow
  \cat{CAT}$. On objects, this $2$-functor sends a $\W$-category $\C$
  to the ordinary category $\C_x$ whose objects are the objects of $\C$ with
  extent $x$, and whose morphisms $X \rightarrow Y$ are morphisms
  $\unit_x \rightarrow \C(X,Y)$ in $\W(x,x)$.
\end{defi}

\subsection{Enrichment through variation}
\label{sec:enrichm-thro-vari}

It was shown in~\cite{Gordon1997Enrichment} that there is a close link
between $\W$-categories and $\W$-\emph{representations}. A
$\W$-representation is simply a homomorphism of bicategories
$F \colon \W \rightarrow \cat{CAT}$, but thought of as a ``left
action''; thus, we notate the functors
$F_{xy} \colon \W(x,y) \rightarrow \cat{CAT}(Fx,Fy)$ as
$W \mapsto W \ast_F (\thg)$, and write the components of the coherence
isomorphisms for $F$ as maps
$\lambda \colon I_x \ast_F X \rightarrow X$ and
$\alpha \colon (V \otimes W) \ast_F X \rightarrow V \ast_F (W \ast_F
X)$.

Theorem~3.7 of~\cite{Gordon1997Enrichment} establishes an equivalence
between \emph{closed} $\W$-representations and \emph{tensored}
$\W$-categories. Here, a $\W$-representation is called \emph{closed}
if each functor $(\thg) \ast_F W \colon \W(x,y) \rightarrow Fy$ has a
right adjoint $\spn{W, \thg}_F \colon Fy \rightarrow \W(x,y)$; while a
$\W$-category is \emph{tensored} if it admits all tensors in the
following sense:

\begin{defi}
  If $\W$ is a bicategory and $\C$ is a $\W$-category, then a
  \emph{tensor} of $X \in \C_x$ by $W \in \W(x,y)$ is an object
  $W \cdot X \in \C_y$ together with a map
  $u \colon W \rightarrow \C(X, W \cdot X)$ in $\W(x,y)$ such that,
  for any $U \in \W(y,z)$ and $Z \in \C_z$, the assignation
  \begin{equation}\label{eq:38}
    U \xrightarrow{f} \C(W \cdot X, Z) \ \ \mapsto \ \ 
    U \otimes W \xrightarrow{f \otimes u} \C(V \cdot X, Z) \otimes \C(X,
    W \cdot X) \xrightarrow{\circ} \C(X, Z)
  \end{equation}
  establishes a bijection between morphisms $U \rightarrow \C(W \cdot X, Z)$
  in $\W(y,z)$ and morphisms $U \otimes W \rightarrow \C(X,Z)$ in
  $\W(x,z)$.
\end{defi}

We note here for future use that a tensor $W \cdot X$ is said to be
\emph{preserved} by a $\W$-functor $F \colon \C \rightarrow \D$ if the
composite
$W \rightarrow \C(X, W \cdot X) \rightarrow \D(FX, F(W \cdot X))$
exhibits $F(W \cdot X)$ as $W \cdot FX$; and that tensors by a
$1$-cell $W$ are called \emph{absolute} if they are preserved by any
$\W$-functor.
 \begin{prop}
   \label{prop:29}\cite[Theorem~3.7]{Gordon1997Enrichment}
   There is an equivalence between the $2$-category
   $\W\text-\cat{CAT}_{\mathrm{tens}}$ of tensored $\W$-categories, 
   tensor-preserving $\W$-functors and $\W$-natural transformations, and the
   $2$-category $\mathrm{Hom}(\W, \cat{CAT})_\mathrm{cl}$ of closed
   $\W$-representations, pseudonatural transformations and modifications.
 \end{prop}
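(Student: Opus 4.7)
The plan is to give an explicit construction in both directions and verify they extend to mutually pseudoinverse $2$-functors.

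\emph{From $\W$-categories to representations.} Given a tensored $\W$-category $\C$, I define a homomorphism $F_\C\colon \W\to\CAT$ by $F_\C(x)=\C_x$ (the representable category from Definition~\ref{def:7}), and, for a $1$-cell $W\colon x\to y$, by sending $W\ast_{F_\C}(\thg)\colon \C_x\to \C_y$ to the ordinary functor $X\mapsto W\cdot X$, the existence of which is guaranteed by the tensoring hypothesis. The pseudofunctoriality constraints $\lambda$ and $\alpha$ come, by the universal property~\eqref{eq:38}, from the iterated-tensor identifications $\unit_x\cdot X\cong X$ and $(V\otimes W)\cdot X\cong V\cdot(W\cdot X)$, which follow because $\C(X,\thg)$ converts tensors into $\W$-composites. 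Closedness is immediate: for fixed $X\in\C_x$, the adjoint $\spn{X,\thg}_{F_\C}\colon \C_y\to\W(x,y)$ is taken to be $Z\mapsto \C(X,Z)$, and the bijection in~\eqref{eq:38} (specialised to $U=\C(X,Z)$) is precisely the required adjunction.

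\emph{From representations to $\W$-categories.} Conversely, given a closed $\W$-representation $F$, I build a tensored $\W$-category $\C_F$ whose objects are pairs $(x,X)$ with $X\in Fx$, with extent $x$, and with hom-object $\C_F((x,X),(y,Y))\defeq\spn{X,Y}_F\in\W(x,y)$. Composition and identities arise by transposing, under the adjunctions $(\thg)\ast_F X\dashv\spn{X,\thg}_F$, the evident counit-built maps $\spn{Y,Z}_F\ast_F(\spn{X,Y}_F\ast_F X)\to Z$ and identities $\unit_x\ast_F X\xrightarrow{\lambda}X$; the associativity and unitality axioms for $\C_F$ then follow from the coherence of $F$ as a homomorphism together with the triangle identities. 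Tensors are supplied by $W\cdot(x,X)=(y,W\ast_F X)$, with unit map $W\to\C_F((x,X),(y,W\ast_F X))=\spn{X,W\ast_F X}_F$ got by transposing $\id_{W\ast_F X}$; the tensor bijection~\eqref{eq:38} is then the $(\thg)\ast_F X\dashv\spn{X,\thg}_F$ adjunction combined with composition in $F$.

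\emph{Pseudoinverse and $2$-cell data.} These assignments are mutually pseudoinverse on objects: starting with a tensored $\C$, the category $(\C_{F_\C})$ has objects $(x,X)$ with $X\in\C_x$, hence essentially the objects of $\C$ sorted by extent, and its hom-objects $\spn{X,Y}_{F_\C}=\C(X,Y)$ reproduce $\C$; conversely, starting with $F$, the category $(F_{\C_F})(x)$ has as objects those of $Fx$ and acts on $1$-cells by $W\ast_{F_{\C_F}}X=W\ast_F X$ by the defining property of the tensor in $\C_F$. To extend these to $2$-functors, note that a $\W$-functor $H\colon\C\to\D$ preserves tensors precisely when the canonical comparison $W\ast_{F_\C}Hx\to H(W\ast_{F_\C}x)$ is invertible for every $W$ and $x$; these are exactly the components of a pseudonatural transformation $F_\C\Rightarrow F_\D$. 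A $\W$-transformation $\alpha\colon H\To K$ then corresponds to a modification, its components $\alpha_X\colon\unit_{\epsilon X}\to\D(HX,KX)$ being, after transposition, maps $HX\to KX$ in $\D_{\epsilon X}$ making the pseudonaturality squares commute. The main obstacle is bookkeeping the coherence data: verifying that the constraints $\lambda,\alpha$ for $F_\C$ satisfy the homomorphism coherence pentagons and triangles, and that the composition and identity maps of $\C_F$ are associative and unital, both reduce---after careful unwinding of the transposes under the closedness adjunction---to routine diagram chases that use only the triangle identities and the coherence of $F$, but these are the steps which must be executed with care.
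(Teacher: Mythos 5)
Your construction is essentially the paper's own (which is itself only a sketch deferring to Gordon--Power): the same passage $\C \mapsto (x \mapsto \C_x,\ W \ast X = W\cdot X,\ \spn{X,Z}=\C(X,Z))$ in one direction, and the same transposed-counit composition with tensors $W\cdot X = W\ast_F X$ in the other, your added detail on $1$- and $2$-cells being exactly what the paper omits. One small correction: the closure adjunction $\C_y(V\cdot X,Z)\cong\W(x,y)(V,\C(X,Z))$ comes from~\eqref{eq:38} by specialising $U$ to the identity $1$-cell $\unit_y$ and invoking Definition~\ref{def:7}, not by setting $U=\C(X,Z)$.
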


 \begin{proof}[Proof (sketch)]
   In one direction, the closed $\W$-representation $C$ associated to a
   tensored $\W$-category $\C$ is defined on objects by $C(x) = \C_x$,
   and with action by $1$-cells given by tensors:
   $W \ast_{C} X = W \cdot X$. We do not need the further details here,
   and so omit them. In the other direction, the tensored $\W$-category
   $\F$ associated to a closed representation
   $F \colon \W \rightarrow \cat{CAT}$ has objects of extent $a$ being
   objects of $Fa$; hom-objects given by $\F(X,Y) = \spn{X,Y}_F$; and
   composition and identities given by transposing the maps
   \begin{equation*}
     (\spn{Y,Z}_F \otimes \spn{X,Y}_F) \ast_F X \xrightarrow{\alpha}
     \spn{Y,Z}_F \ast_F (\spn{X,Y}_F \ast_F X) \xrightarrow{1 \ast \varepsilon}
     \spn{Y,Z}_F \ast_F Y
     \xrightarrow{\varepsilon} Z
   \end{equation*}
   and $\lambda \colon I_a \ast_F X \rightarrow X$ under the closure
   adjunctions. The $\W$-category $\F$ so obtained admits all tensors on
   taking $W \cdot X = W \ast_F X$ with unit
   $W \rightarrow \spn{X, W \ast_F X}_F$ obtained from the closure
   adjunctions.
 \end{proof}

We will make use of this equivalence in
Section~\ref{sec:algebr-finit-monads} below, and will require the
following easy consequence of the definitions:

\begin{prop}
  \label{prop:9}
  Let $F \colon \W \rightarrow \cat{CAT}$ be a closed representation,
  corresponding to the tensored $\W$-category $\F$, and let
  $T \colon a \rightarrow a$ be a monad in $\W$, corresponding to the
  one-object $\W$-category $\T$. There is an isomorphism of categories
  $\W\text-\cat{CAT}(\T, \F) \cong F(T)\text-\cat{Alg}$, natural in
  maps of monads on $a$ in $\W$.
\end{prop}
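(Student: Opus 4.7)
The plan is to unpack both sides of the asserted isomorphism into their constituent data and axioms, and then identify them using the closure adjunctions of the representation $F$.

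First, I would unpack what a $\W$-functor $G \colon \T \to \F$ amounts to. Since $\T$ has a single object $\ast$ of extent $a$, and $G$ is extent-preserving, $G$ picks out an object $X \in \F_a = F(a)$ together with a $1$-cell $g \colon T \to \F(X,X)$ in $\W(a,a)$, subject to the two $\W$-functor axioms: compatibility with the identity $\iota_\ast = \eta \colon \unit_a \to T$ of $\T$, and with the composition $\mu_\ast = \mu \colon T \comp T \to T$ of $\T$. Because the $\W$-category $\F$ is constructed from $F$ via $\F(X,Y) = \spn{X,Y}_F$, the closure adjunction $(\thg) \ast_F X \dashv \spn{X, \thg}_F$ gives a natural bijection between maps $g \colon T \to \F(X,X)$ in $\W(a,a)$ and maps $\bar g \colon T \ast_F X \to X$ in $F(a)$; but $T \ast_F X = F(T)(X)$, so this bijection delivers exactly the datum of an $F(T)$-action on $X$.

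The next step is to check that, under this transposition, the two $\W$-functor axioms for $g$ become precisely the unit and associativity axioms for $\bar g$ as an $F(T)$-algebra. This requires combining the explicit formulas for composition and identity of $\F$ from the proof of Proposition~\ref{prop:29} (composition transposes the triangle built from $\alpha$ and two copies of $\varepsilon$; identity transposes $\lambda$) with the defining equations of $g$, so that after transposition the functoriality diagrams become the $F(T)$-algebra axioms modulo the coherence $2$-cells of the representation $F$. For morphisms, the same unpacking applies: a $\W$-transformation $G \To G'$ is a map $\unit_a \to \F(X, X')$, whose transpose, composed with $\lambda \colon \unit_a \ast_F X \to X$, yields a morphism $\bar\alpha \colon X \to X'$ in $F(a)$, and the naturality axiom translates under the adjunction into the requirement that $\bar\alpha$ commute with the $F(T)$-actions.

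Finally, naturality in maps of monads on $a$ reduces to functoriality of the closure transposition in the variable $T$, together with the observation that both constructions $T \mapsto \W\text-\cat{CAT}(\T, \F)$ and $T \mapsto F(T)\text-\cat{Alg}$ are contravariantly functorial via precomposition by the underlying $1$-cell of a monad map. The main obstacle is the axiom translation in the middle step: it is a pure coherence calculation, but it demands careful bookkeeping of the constraints $\alpha$ and $\lambda$ of $F$ together with the triangle identities of the closure adjunctions, in order to verify that ``preservation of composition in $\F$'' genuinely becomes ``associativity of the $F(T)$-action.''
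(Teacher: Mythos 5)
Your proposal is correct and follows essentially the same route as the paper's proof: unpack the single-object $\W$-functor into an object $X \in Fa$ and a map $T \to \spn{X,X}_F$, transpose under the closure adjunction to obtain an $F(T)$-action, match the functoriality axioms with the algebra axioms, and treat $\W$-transformations and naturality in $T$ the same way. The only difference is that you flag the coherence bookkeeping for the axiom translation explicitly, whereas the paper elides it.
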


\begin{proof}
  The action on objects of a $\W$-functor $\T \to \F$ picks out an
  object of extent $a$ in $\F$, thus, an object $X \in Fa$. The action
  on homs is given by a map $x \colon T \to \spn{X,X}_F$ in $\W(a,a)$,
  while functoriality requires the commutativity of:
  \begin{equation*}
    \cd[@!C@C-1.5em@-0.5em]{
      & I_a \ar[dl]_\eta \ar[dr]^\iota \\
      T \ar[rr]^x & & \spn{X,X}_F} \qquad \text{and} \qquad
    \cd[@C+0.5em@-0.5em]{ T \otimes T \ar[r]^-{x \otimes x}
      \ar[d]_{\mu} &
      \spn{X,X}_F \otimes \spn{X,X}_F \ar[d]^{\mu} \\
      T \ar[r]^-x & \spn{X,X}_F\rlap{ .} }
  \end{equation*}
  Transposing under adjunction, this is equally to give $X \in Fa$ and a map
  $T \ast_F X \to X$ satisfying the two axioms to be an algebra for
  $F(T) = T \ast_F (\thg)$. Further, to give a $\W$-transformation
  $F \Rightarrow G \colon \T \to \F$ is equally to give
  $\phi \colon I_x \to \spn{X,Y}_F$ such that
  \begin{equation*}
    \cd[@C+1em]{
      T \ar[r]^{y} \ar[d]_{x} & \spn{Y, Y}_F \ar[r]^-{\varphi \otimes 1}
      & \spn{X,Y}_F \otimes \spn{Y,Y}_F \ar[d]^{\mu} \\
      \spn{X, X}_F \ar[r]^-{1 \otimes \varphi} &
      \spn{X,X}_F\otimes\spn{X,Y}_F \ar[r]^-{\mu} & \spn{X, Y}_F }
  \end{equation*}
  commutes; which, transposing under adjunction and using the coherence
  constraint $I_a \ast X \cong X$, is equally to give a map $X \to Y$
  commuting with the $F(T)$-actions. The naturality of the
  correspondence just described in $T$ is easily checked.
\end{proof}

\section{Finitary monads and their algebras}
\label{sec:lex-prof-finit}

We now begin our enriched-categorical analysis of the monad--theory
correspondence over an lfp base. We first describe a bicategory $\lp$
of \emph{lex profunctors} which is biequivalent to the bicategory
$\lfp$ of finitary functors between lfp categories, but more
convenient to work with; we then exhibit each finitary monad on an lfp
category as a $\lp$-category, and the associated category of algebras
as a category of $\lp$-enriched functors.

\subsection{Finitary monads as enriched categories}
\label{sec:lex-profunctors}

The basic theory of lfp categories tells us that for any small
finitely-complete $\mathbb{A}$, the category
$\fl(\mathbb{A}, \cat{Set})$ of finite-limit-preserving functors
$\mathbb{A} \rightarrow \cat{Set}$ is lfp, and moreover that every lfp
category is equivalent to one of this form. So $\lfp$ is biequivalent
to the bicategory whose objects are small finitely-complete
categories, whose hom-category from $\mathbb{A}$ to $\mathbb{B}$ is
$\lfp(\fl(\mathbb{A}, \cat{Set}), \fl(\mathbb{B}, \cat{Set}))$, and
whose composition is inherited from~$\lfp$.

Now, since for any small finitely-complete $\mathbb{A}$, the inclusion
$\mathbb{A}^\mathrm{op} \rightarrow \fl(\mathbb{A}, \cat{Set})$
exhibits its codomain as the free filtered-cocomplete category on its
domain, there are equivalences
$\lfp(\fl(\mathbb{A}, \cat{Set}), \fl(\mathbb{B}, \cat{Set})) \simeq
[\mathbb{A}^\mathrm{op}, \fl(\mathbb{B}, \cat{Set})]$; thus
transporting the compositional structure of $\lfp$ across these
equivalences, we obtain:

\begin{defi}
  \label{def:13}
  The right-closed bicategory $\lp$ of \emph{lex profunctors} has:
  \begin{itemize}

  \item As objects, small categories with finite limits.

  \item
    $\lp(\mathbb{A}, \mathbb{B}) = [\mathbb A^\mathrm{op},
    \fl(\mathbb{B}, \cat{Set})]$; we typically identify objects therein
    with functors
    $\mathbb{A}^\mathrm{op} \times \mathbb{B} \rightarrow \cat{Set}$
    that preserve finite limits in their second variable.

  \item The identity $1$-cell
    $I_\mathbb{A} \in \lp(\mathbb{A}, \mathbb{A})$ is given by
    $I_\mathbb{A}(a',a) = \mathbb{A}(a',a)$, while the composition of
    $M \in \lp(\mathbb{A}, \mathbb{B})$ and
    $N \in \lp(\mathbb{B}, \mathbb{C})$ is given by:
    \begin{equation}\label{eq:15}
      (N \otimes M)(a, c) = \textstyle\int^{b \in \mathbb{B}} N(b,c) \times M(a,b)\rlap{ .}
    \end{equation}

  \item For $M \in \lp(\mathbb{A}, \mathbb{B})$ and
    $P \in \lp(\mathbb{A}, \mathbb{C})$, the right closure
    $[M,P] \in \lp(\mathbb{B}, \mathbb{C})$ is defined by
    \begin{equation}
      \label{eq:41}
      [M,P](b,c) = \textstyle\int_a\, [M(a,b), P(a,c)]\rlap{ .}
    \end{equation}
  \end{itemize}
\end{defi}

By the above discussion, $\lfp$ is biequivalent to $\lp$, and this
induces an equivalence between the category of monads on $\A$ in
$\lfp$---thus, the category of finitary monads on $\A$---and the
category of monads on $\af$ in $\lp$. Such monads correspond with
one-object $\lp$-categories of extent $\af$ and so:

\begin{prop}
  \label{prop:8}
  For any locally finitely presentable category $\A$, the category
  $\cat{Mnd}_f(\A)$ of finitary monads on $\A$ is equivalent to the
  category of $\lp$-categories with a single object of extent $\af$.
\end{prop}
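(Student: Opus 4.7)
The plan is to chain together three equivalences, each essentially a matter of unpacking the relevant definitions. First, by the definition of monad in a bicategory, a finitary monad on $\A$ is precisely a monad on the object $\A$ in $\lfp$, and a morphism of finitary monads is a morphism of the corresponding monads; hence $\cat{Mnd}_f(\A)$ is isomorphic to the category of monads on $\A$ in $\lfp$.

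Second, I would invoke the biequivalence $\lfp \simeq \lp$ described in the discussion preceding the proposition; by construction this biequivalence sends the lfp category $\A \simeq \fl(\af, \Set)$ to its presenting finitely-complete category $\af$. Since a biequivalence of bicategories induces an equivalence between the categories of monads on corresponding objects---a routine consequence of the equivalences of hom-categories together with their compatibility with composition of $1$-cells and identity $1$-cells up to coherent isomorphism---we obtain an equivalence between monads on $\A$ in $\lfp$ and monads on $\af$ in $\lp$. Third, I would unpack Definition~\ref{def:1} with $\W = \lp$: a $\lp$-category with a single object $\ast$ of extent $\af$ amounts to a $1$-cell $T = \C(\ast,\ast) \in \lp(\af,\af)$ together with a composition $\mu \colon T \otimes T \to T$ and a unit $\iota \colon I_{\af} \to T$ satisfying associativity and unitality, and this is precisely the data and axioms of a monad on $\af$ in $\lp$. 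Similarly, a $\lp$-functor between two such one-object $\lp$-categories is exactly a morphism of the corresponding monads. Composing the three equivalences yields the proposition.

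The main (and essentially only) point of substance is the second step: verifying that the biequivalence $\lfp \simeq \lp$ transports the category of monads on $\A$ to the category of monads on $\af$. I would handle this either by appealing to the general bicategorical fact that biequivalences preserve monads and induce equivalences of their categories of morphisms, or by direct computation using the explicit presentation of $\lp$ in Definition~\ref{def:13}, tracing a finitary monad $T \colon \A \to \A$ through the equivalence $\lfp(\A, \A) \simeq [\A_f, \fl(\af, \Set)] = \lp(\af, \af)$ and checking that its unit and multiplication are carried to those of the corresponding monad in $\lp$ computed by the coend formula~\eqref{eq:15}. Either way, the verification is routine.
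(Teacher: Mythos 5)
Your proposal is correct and follows essentially the same route as the paper: identify finitary monads on $\A$ with monads on $\A$ in $\lfp$, transport these across the biequivalence $\lfp \simeq \lp$ to monads on $\af$ in $\lp$, and observe that the latter are precisely one-object $\lp$-categories of extent $\af$. The paper treats the transport step as immediate from the biequivalence, exactly as you propose to justify it.
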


\subsection{Algebras for finitary monads as enriched functors}
\label{sec:algebr-finit-monads}

We now explain algebras for finitary monads in the $\lp$-enriched
context. Composing the biequivalence $\lp \rightarrow \lfp$ with the
inclusion $2$-functor $\lfp \rightarrow \cat{CAT}$ yields a
homomorphism $S \colon \lp \rightarrow \cat{CAT}$ which on objects
sends $\mathbb{A}$ to $\fl(\mathbb{A}, \cat{Set})$, and for which the
action of a $1$-cell $M \in \lp(\mathbb{A}, \mathbb{B})$ on an object
$X \in \fl(\mathbb{A}, \cat{Set})$ is given as on the left in
\begin{equation*}
  (M \ast_S X)(b) = \textstyle\int^{a \in \mathbb{A}} M(a,b) \times Xa \qquad\qquad 
  \spn{X,Y}_S(a,b) = \cat{Set}(Xa,Yb)\rlap{ .}
\end{equation*}
This $S$ is a closed representation, where for
$X \in \fl(\mathbb{A}, \cat{Set})$ and
$Y \in \fl(\mathbb{B}, \cat{Set})$ we define $\spn{X,Y}_S$ as to the
right above; and so applying Proposition~\ref{prop:29} gives a
tensored $\lp$-category $\S$ with objects of extent $\mathbb{A}$ being
finite-limit-preserving functors $\mathbb{A} \rightarrow \cat{Set}$,
and with hom-objects $\S(X,Y)(a,b) = \cat{Set}(Xa, Yb)$.

\begin{prop}
  \label{prop:10}
  For any locally finitely presentable category $\A$, the embedding of finitary
  monads on $\A$ as one-object $\lp$-categories obtained in
  Proposition~\ref{prop:8} fits into a triangle, commuting up to
  pseudonatural equivalence:
  \begin{equation}\label{eq:10}
    \cd[@!C@C-4em]{
      \cat{Mnd}_f(\A)^\mathrm{op}
      \ar[dr]_-{(\thg)\text-\cat{Alg}}\ar[rr]^(0.5){} &
      \twosim{d} & (\lp\text-\cat{CAT})^\mathrm{op}\rlap{ .}
      \ar[dl]^-{\ \ \ \lp\text-\cat{CAT}(\thg, \S)} \\ & \cat{CAT} &
    }
  \end{equation}
\end{prop}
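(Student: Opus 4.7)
The plan is to reduce Proposition~\ref{prop:10} to the general bicategorical fact stated as Proposition~\ref{prop:9}, with $\W = \lp$ and $F = S \colon \lp \to \cat{CAT}$ the closed representation just defined, whose associated tensored $\lp$-category is $\S$. Under the equivalence of Proposition~\ref{prop:8}, a finitary monad $T$ on $\A$ corresponds to a monad $\tilde T \colon \af \to \af$ in $\lp$, equivalently, to a one-object $\lp$-category $\T$ of extent $\af$; and Proposition~\ref{prop:9} immediately yields a natural isomorphism
\begin{equation*}
  \lp\text-\cat{CAT}(\T, \S) \;\cong\; S(\tilde T)\text-\cat{Alg}
\end{equation*}
with respect to morphisms of monads on $\af$ in $\lp$.

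The next step is to identify $S(\tilde T)\text-\cat{Alg}$ with the ordinary category $T\text-\cat{Alg}$ of $T$-algebras. By construction, $S \colon \lp \to \cat{CAT}$ is the composite of the biequivalence $\lp \simeq \lfp$ with the inclusion $\lfp \to \cat{CAT}$; so under the induced equivalence $\fl(\af, \cat{Set}) \simeq \A$, the endofunctor $S(\tilde T)$ on $\fl(\af, \cat{Set})$ corresponds coherently to the original $T$ on $\A$. Because $\lp \simeq \lfp$ is a biequivalence, this equivalence of underlying endofunctors lifts to an isomorphism of monads, so induces an equivalence $S(\tilde T)\text-\cat{Alg} \simeq T\text-\cat{Alg}$. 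Composing with the previous isomorphism yields the desired equivalence $T\text-\cat{Alg} \simeq \lp\text-\cat{CAT}(\T, \S)$.

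It remains to upgrade this pointwise equivalence to a pseudonatural equivalence filling the triangle~\eqref{eq:10}. The naturality clause in Proposition~\ref{prop:9} already delivers naturality with respect to maps of monads on $\af$ in $\lp$, while Proposition~\ref{prop:8} ensures that such maps are precisely the images of morphisms of finitary monads on $\A$ under the embedding on the top of~\eqref{eq:10}. Combining these with the fact that the equivalence of $T$-algebras with $S(\tilde T)$-algebras is itself natural in $T$ (being induced by a biequivalence of bicategories), we obtain the pseudonaturality and thus the triangle of~\eqref{eq:10}.

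The main obstacle is not any single hard step but rather the bookkeeping needed to tie the three separate naturalities together coherently: one must check that a morphism $T \to T'$ of finitary monads on $\A$, passed through the biequivalence $\lp \simeq \lfp$ to a morphism $\tilde T \to \tilde T'$ and then through Proposition~\ref{prop:9}, induces the same functor $T'\text-\cat{Alg} \to T\text-\cat{Alg}$ as the one given directly by restriction of actions. Since every identification in play is manufactured from the single biequivalence $\lp \simeq \lfp$, this compatibility holds by construction, but care is required in spelling out the resulting coherence $2$-cells.
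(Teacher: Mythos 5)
Your proposal is correct and follows essentially the same route as the paper: apply Proposition~\ref{prop:9} to the closed representation $S$ to get $\lp\text-\cat{CAT}(\T,\S) \cong S(\tilde T)\text-\cat{Alg}$, and then use the biequivalence $\lp \simeq \lfp$ to identify $S(\tilde T)\text-\cat{Alg}$ with $T\text-\cat{Alg}$, naturally in $T$. The paper's proof is exactly this two-step argument (writing $T'$ and $T''$ for your $\tilde T$ and $S(\tilde T)$), and is no more detailed than yours about the coherence bookkeeping in the final step.
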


\begin{proof}
  Given $T \in \cat{Mnd}_f(\A)$, which is equally a monad on $\A$ in
  $\lfp$, we can successively apply the biequivalences
  $\lfp \rightarrow \lp$ and $\lp \rightarrow \lfp$ to
  obtain in turn a monad $T'$ on $\A_f^\mathrm{op} \in \lp$ and a
  monad $T''$ on $\cat{FL}(\A_f^\mathrm{op}, \cat{Set})$. It follows
  easily from the fact of a biequivalence that
  $T\text-\cat{Alg} \simeq T''\text-\cat{Alg}$.

  Now, starting from $T \in \cat{Mnd}_f(\A)$, the functor across the
  top of~\eqref{eq:10} sends it to the one-object $\lfp$-category
  $\T'$ corresponding to $T'$; whereupon by Proposition~\ref{prop:9},
  we have pseudonatural equivalences
  \begin{equation*}
    \lp\text-\cat{CAT}(\T', \S) \cong S(T')\text-\cat{Alg} =
    T''\text-\cat{Alg} \simeq T\text-\cat{Alg}\text{ .} \tag*{\qEd}
  \end{equation*}
  \def\popQED{}
\end{proof}


\subsection{General $\lp$-categories}
\label{sec:notation}

Before turning to the relationship of $\lp$-categories and Lawvere
$\A$-theories, we take a moment to unpack the data for a general
$\lp$-category $\C$. We have objects $X, Y, \dots$ with associated
extents $\mathbb{A}, \mathbb{B}, \dots$ in $\lp$; while for objects
$X \in \C_\mathbb{A}$ and $Y \in \C_\mathbb{B}$, we have the
hom-object
$\C(X,Y) \colon \mathbb{A}^\mathrm{op} \times \mathbb{B} \rightarrow
\cat{Set}$, which is a functor preserving finite limits in its second
variable. By the coend formula~\eqref{eq:15} for $1$-cell composition
in $\lp$, composition in $\C$ is equally given by functions
\begin{equation}\label{eq:29}
  \begin{aligned}
    \C(Y,Z)(j,k) \times \C(X,Y)(i,j) &\rightarrow \C(X,Z)(i, k) \\
    (g,f) & \mapsto g \circ f
  \end{aligned}
\end{equation}
which are natural in $i \in \mathbb{A}$ and $k \in \mathbb{C}$ and
dinatural in $j \in \mathbb{B}$. On the other hand, identities in $\C$
are given by functions
$\iota_X \colon \mathbb{A}(i,j) \rightarrow \C(X,X)(i,j)$, natural in
$i,j \in \mathbb{A}$; if we define $1_{X,i} \defeq \iota_X(1_i)$, then
the $\lp$-category axioms for $\C$ say that
$f \circ 1_{X,i} = f = 1_{Y,j} \circ f$ for all $f \in \C(X,Y)(i,j)$
and that the operation~\eqref{eq:29} is associative. Note that the
naturality of each $\iota_X$ together with the unit axioms imply that
the action on morphisms of the hom-object $\C(X,Y)$ is given by
\begin{equation}\label{eq:5}
  \begin{aligned}
    \C(X,Y)(\varphi, \psi) \colon \C(X,Y)(i,j) &\rightarrow
    \C(X,Y)(i',j')\\
    f & \mapsto \iota_Y(\psi) \circ f \circ \iota_X(\varphi)\rlap{ .}
  \end{aligned}
\end{equation}
Applying naturality of $\iota_X$ again to this formula yields the
following functoriality equation for any pair of composable maps in
$\mathbb{A}$:
\begin{equation}
  \label{eq:30}
  \iota_X(\varphi' \circ \varphi) = \iota_X(\varphi') \circ
\iota_X(\varphi)\rlap{ .}
\end{equation}

\section{Partial finite completeness}
\label{sec:part-finite-completeness}

In the following two sections, we will identify the absolute-tensored
$\lp$-categories with what we call \emph{partially finitely complete}
ordinary categories; this identification will take the form of a
biequivalence between suitably-defined $2$-categories. We will exploit
this biequivalence in Section~\ref{sec:lawv-lp-categ} in order to
identify Lawvere $\A$-theories with certain functors between absolute-tensored
$\lp$-categories.

\subsection{Partially finitely complete categories}
\label{sec:part-finit-compl}
We begin by introducing the $2$-category of partially finitely
complete categories and partially finite-limit-preserving functors.
\begin{defi}
  \label{def:12}
  By a \emph{left-exact sieve} on a category $\C$, we mean a
  collection $\S$ of finite-limit-preserving functors
  $\mathbb{A} \rightarrow \C$, each with small, finitely-complete
  domain, and satisfying the following conditions, wherein we write
  $\S[\mathbb{A}]$ for those elements of $\S$ with domain
  $\mathbb{A}$:
  \begin{enumerate}[(i)]
  \item If $X \in \S[\mathbb{B}]$ and
    $G \in \cat{FL}(\mathbb{A}, \mathbb{B})$, then
    $XG \in \S[\mathbb{A}]$;
  \item If $X \in \S[\mathbb{A}]$ and
    $X \cong Y \colon \mathbb{A} \rightarrow \C$, then
    $Y \in \S[\mathbb{A}]$;
  \item Each object of $\C$ is in the image of some functor in $\S$.
  \end{enumerate}
  A \emph{partially finitely complete category} $(\C, \S_\C)$ is a
  category $\C$ together with a left-exact sieve $\S_\C$ on it. Where
  confusion is unlikely, we may write $(\C, \S_\C)$ simply as $\C$. A
  \emph{partially finite-limit-preserving functor}
  $(\C, \S_\C) \rightarrow (\D, \S_\D)$ is a functor
  $F \colon \C \rightarrow \D$ such that $FX \in \S_\D$ for all
  $X \in \S_\C$; we call such an $F$ \emph{sieve-reflecting} if, for
  all $Y \in \S_\D$, there exists $X \in \S_\C$ such that
  $FX \cong Y$. We write $\cat{PARFL}$ for the $2$-category of
  partially finitely complete categories, partially finite-limit-preserving functors,
  and arbitrary natural transformations.
\end{defi}

The following examples should serve to clarify the relevance of these
notions to Lawvere theories over a general lfp base.

\begin{exa}
  \label{ex:1}
  Any finitely complete $\C$ can be seen as partially finitely
  complete when endowed with the sieve $\S_\C$ of all
  finite-limit-preserving functors into $\C$ with small domain. If
  $\D$ is also finitely complete, then any finite-limit-preserving
  $F \colon \C \rightarrow \D$ is clearly also partially finite-limit-preserving;
  conversely, if $F \colon \C \rightarrow \D$ is partially finite-limit-preserving,
  then for any finite diagram $D \colon \mathbb{I} \rightarrow \C$,
  closing its image in $\C$ under finite limits yields a small
  subcategory $\mathbb{A}$ for which the full inclusion
  $J \colon \mathbb{A} \rightarrow \C$ preserves finite limits. As $F$
  is partially finite-limit-preserving, the composite
  $FJ \colon \mathbb{A} \rightarrow \D$ also preserves finite limits;
  in particular, the chosen limit cone over $D$ in $\C$---which lies
  in the subcategory $\mathbb{A}$---is sent to a limit cone in $\D$.
  It follows there is a full and locally full inclusion of
  $2$-categories $\fl \rightarrow \pfl$.
\end{exa}

\begin{exa}
  \label{ex:2}
  If $J \colon \A_f^\mathrm{op} \rightarrow \L$ is a Lawvere
  $\A$-theory, then $\L$ becomes a partially finitely complete
  category when endowed with the sieve generated by $J$:
  \begin{equation}\label{eq:9}
    \S_\L = \{ F \colon \mathbb{A} \rightarrow \L : F \cong JG \text{ for some
      finite-limit-preserving }
    G \colon \mathbb{A} \rightarrow \A_f^\mathrm{op}\}\rlap{ .}
  \end{equation}
  Clearly $\S_\L$ satisfies conditions (i) and (ii) above, and
  satisfies (iii) by virtue of $J$ being bijective on objects.
  Moreover, a partially finite-limit-preserving $\L \rightarrow \C$ is precisely a
  functor $F \colon \L \rightarrow \C$ such that $FJ \in \S_\C$; so in
  particular, a partially finite-limit-preserving $\L \rightarrow \cat{Set}$ is
  precisely a model for the Lawvere $\A$-theory $\L$.
\end{exa}

\subsection{Partial finite completeness and $\lp$-enrichment}

Towards our identification of absolute-tensored $\lp$-categories with
partially finitely complete categories, we now construct a
$2$-adjunction
\begin{equation}\label{eq:34}
  \cd{
    {\pfl} \ar@<-4.5pt>[r]_-{\Gamma} \ar@{}[r]|-{\bot} &
    {\lp\text-\cat{CAT}} \ar@<-4.5pt>[l]_-{\int}\rlap{ .}
  }
\end{equation}

\begin{defi}
  \label{def:17}
  Let $\C$ be a partially finitely complete category. The
  $\lp$-category $\Gamma(\C)$ has objects of extent $\mathbb{A}$ given
  by elements $X \in \S_\C[\mathbb{A}]$, and remaining data defined as
  follows:
  \begin{itemize}[itemsep=0.25\baselineskip]
  \item For $X \in \S_\C[\mathbb{A}]$ and $Y \in \S_\C[\mathbb{B}]$,
    the hom-object $\Gamma(\C)(X,Y) \in \lp(\mathbb{A}, \mathbb{B})$
    is given by $\Gamma(\C)(X,Y)(i,j) = \C(Xi,Yj)$. Note that this
    preserves finite limits in its second variable since $Y$ and each
    $\C(Xi, \thg)$ do so.
  \item Composition in $\Gamma(\C)$ may be specified, as
    in~\eqref{eq:29}, by natural families of functions
    $\Gamma(\C)(Y,Z)(j,k) \times \Gamma(\C)(X,Y)(i,j) \rightarrow
    \Gamma(\C)(X,Z)(i,k)$, which we obtain from composition in $\C$.
  \item Identities
    $\iota_X \colon \mathbb{A}(i,j) \rightarrow \Gamma(\C)(X,X)(i,j) =
    \C(Xi,Xj)$ are given by the action of $X$ on morphisms.
  \end{itemize}
  The $\lp$-category axioms for $\Gamma(\C)$ follow from the category
  axioms of $\C$ and functoriality of each $X$.

  If $F \colon \C \rightarrow \D$ is a partially finite-limit-preserving functor,
  then we define the $\lp$-functor
  $\Gamma(F) \colon \Gamma(\C) \rightarrow \Gamma(\D)$ to have action
  on objects $X \mapsto FX$ (using the fact that $FX \in \S_\D$
  whenever $X \in \S_\C$). The components of the action of $\Gamma(F)$ on hom-objects
  $\Gamma(\C)(X,Y) \rightarrow \Gamma(\D)(FX,FY)$ are functions
  $\C(Xi,Yj) \rightarrow \D(FXi,FYj)$, which are given simply by the
  action of $F$ on morphisms. The $\lp$-functor axioms are immediate
  from functoriality of $F$.

  Finally, for a $2$-cell
  $\alpha \colon F \Rightarrow G \colon \C \rightarrow \D$ in
  $\pfl$, we define a $\lp$-transformation
  $\Gamma(\alpha) \colon \Gamma(F) \Rightarrow \Gamma(G)$ whose
  component $I_\mathbb{A} \rightarrow \Gamma(\D)(FX,GX)$ is given by
  the dinatural family of elements $\alpha_{Xi} \in \D(FXi,GXi)$. The
  $\lp$-naturality of $\Gamma(\alpha)$ amounts to the condition that
  $Gf \circ \alpha_{Xi} = \alpha_{Yj} \circ Ff \colon FXi \rightarrow
  GYj$ for all $f \colon Xi \rightarrow Yj$ in $\C$; which is so by
  naturality of $\alpha$.
\end{defi}

\begin{prop}
  \label{prop:28}
  The data of Definition~\ref{def:17} comprise the action on $0$-, $1$-, and $2$-cells 
  of a $2$-functor $\Gamma \colon \pfl \rightarrow
  \lp\text-\cat{CAT}$, Moreover the $2$-functor $\Gamma$ admits a left $2$-adjoint
  $\int \colon \lp\text-\cat{CAT} \rightarrow \pfl$.
\end{prop}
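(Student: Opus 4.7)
The plan is to verify that $\Gamma$ is a $2$-functor and then to construct a left $2$-adjoint. The $2$-functoriality of $\Gamma$ is essentially bookkeeping: preservation of $1$-cell composition and identities holds because the hom-action of $\Gamma(F)$ on $\C(Xi, Yj)$ is simply the action of $F$ on morphisms, so composing morphisms in $\pfl$ matches composing $\lp$-functors in the image; vertical and whiskered composition of $2$-cells are equally direct from the definitions.

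For the left adjoint $\int$, I would use a collage-style construction. Given a $\lp$-category $\C$, set $\int \C$ to have as objects pairs $(X, a)$ with $X \in \ob \C$ and $a \in \ob \epsilon X$; morphisms $(X, a) \to (Y, b)$ to be elements of $\C(X, Y)(a, b)$; composition given by~\eqref{eq:29}; and identities $1_{(X, a)} = \iota_X(1_a)$. Associativity and unit axioms follow from those of $\C$, and~\eqref{eq:30} ensures that for each $X$ of extent $\mathbb A$ the assignation $\tilde X \colon \mathbb A \to \int \C$ with $a \mapsto (X, a)$ and $\varphi \mapsto \iota_X(\varphi)$ is genuinely functorial. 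The key technical point is that each $\tilde X$ \emph{preserves finite limits}: for any $(Y, b) \in \int \C$ one has $\int \C((Y, b), \tilde X(-)) \cong \C(Y, X)(b, -)$, and this preserves finite limits by the very definition of $\C(Y, X) \in \lp(\epsilon Y, \epsilon X)$. Taking $\S_{\int \C}$ to consist of all functors $\mathbb B \to \int \C$ isomorphic to some $\tilde X \circ G$ with $G \in \fl(\mathbb B, \epsilon X)$, the three axioms of Definition~\ref{def:12} then hold immediately.

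Extending $\int$ to $1$- and $2$-cells is now routine. For a $\lp$-functor $F \colon \C \to \D$, set $\int F(X, a) = (FX, a)$, with action on morphisms via the hom-components of $F$; this is partially finite-limit-preserving since $(\int F) \circ \tilde X = \widetilde{FX}$. For a $\lp$-transformation $\alpha \colon F \Rightarrow G$, the component of $\int \alpha$ at $(X, a)$ is the element $(\alpha_X)_{a, a}(1_a) \in \D(FX, GX)(a, a)$, regarded as a morphism $(FX, a) \to (GX, a)$ in $\int \D$; its naturality in $\int \C$ follows from the $\lp$-naturality of $\alpha$.

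The adjunction is then established by exhibiting a $2$-natural isomorphism of categories
\begin{equation*}
  \pfl(\textstyle\int \C, \D) \cong \lp\text-\cat{CAT}(\C, \Gamma\D) \rlap{ .}
\end{equation*}
From right to left, $F \colon \C \to \Gamma\D$ produces $\hat F \colon \int \C \to \D$ sending $(X, a) \mapsto FX(a)$ on objects and using hom-components on morphisms; this is partially finite-limit-preserving since $\hat F \circ \tilde X = FX \in \S_\D$. From left to right, $G \colon \int \C \to \D$ produces $\check G \colon \C \to \Gamma\D$ with $\check G(X) = G \circ \tilde X$ and with homs given by the action of $G$ on morphisms of $\int \C$. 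These assignments are mutually inverse by direct computation, and the correspondence is $2$-natural after a short verification on $2$-cells. The main technical obstacle is the finite-limit-preservation of each $\tilde X$---this is precisely where the use of \emph{lex} profunctors in the base $\lp$ enters, rather than arbitrary profunctors---after which the remaining verifications become essentially bookkeeping.
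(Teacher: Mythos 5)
Your proposal is correct and follows essentially the same route as the paper: the same collage-style construction of $\int\C$ with objects $(X,a)$ and morphisms the elements of $\C(X,Y)(a,b)$, the same observation that $\int\C((Y,b),\tilde X({-}))\cong\C(Y,X)(b,{-})$ yields finite-limit-preservation of each $\tilde X$ (the paper's $\iota_X$), the same generated sieve, and the same explicit hom-isomorphism $\pfl(\int\C,\D)\cong\lp\text-\cat{CAT}(\C,\Gamma\D)$. The only cosmetic difference is that you also spell out the action of $\int$ on $1$- and $2$-cells directly, whereas the paper lets the $2$-natural hom-isomorphism do that work.
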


\begin{proof}
  The $2$-functoriality of $\Gamma$ is easy to check, and so it
  remains to construct its left $2$-adjoint $\int$. Given a
  $\lp$-category $\C$, we write $\ic$ for the category with:
  \begin{itemize}
  \item \textbf{Objects} of the form $ (X,i)$ where
    $X \in \C_\mathbb{A}$ and $i \in \mathbb{A}$;
  \item \textbf{Morphisms} $f \colon (X,i) \rightarrow (Y,j)$ being
    elements $f \in \C(X,Y)(i,j)$;
  \item \textbf{Identities} given by the elements
    $1_{X,i} \in \C(X,X)(i,i)$;
  \item \textbf{Composition} mediated by the functions~\eqref{eq:29}.
  \end{itemize}
  Given $X \in \C_\mathbb{A}$, we write
  $\iota_X \colon \mathbb{A} \rightarrow \ic$ for the functor given by
  $i \mapsto (X,i)$ on objects and by the identities map
  $\iota_X \colon \mathbb{A}(i,i') \rightarrow \C(X,X)(i,i')$ of $\C$
  on morphisms; note this is functorial by~\eqref{eq:30}. By the
  definition of $\ic$ and~\eqref{eq:5}, we have that
  \begin{equation}
    \label{eq:31}
    \C(X,Y) = (\ic)(\iota_X(\thg), \iota_Y(\thg)) \colon \mathbb{A}^\mathrm{op}
    \times \mathbb{B} \rightarrow \cat{Set}\rlap{ ;}
  \end{equation}
  in particular, as each $\C(X,Y)$ preserves finite limits in its
  second variable, each
  functor $\ic( (X,i), \iota_Y(\thg)) \colon \mathbb{B} \rightarrow \cat{Set}$
  preserve finite limits, whence each
  $\iota_Y \colon \mathbb{B} \rightarrow \ic$ preserves finite limits.
  It follows that $\ic$ is partially finitely complete when endowed
  with the left-exact sieve
  \begin{equation}\label{eq:35}
    \S_{\int\!\C} = \{ \,G \colon \mathbb{A} \rightarrow \Gamma(\C) : G \cong
    \iota_Y F
    \text{ for some $Y \in \C_\mathbb{B}$ and $F \in
      \cat{FL}(\mathbb{A}, \mathbb{B})$}\,\}\rlap{ .}
  \end{equation}

  We now show that $\ic$ provides the value at $\C$ of a left
  $2$-adjoint to $\Gamma$; thus, we must exhibit isomorphisms of
  categories, $2$-natural in $\D \in \pfl$, of the form:
  \begin{equation}\label{eq:32}
    \pfl(\ic, \D) \cong
    \lp\text-\cat{CAT}(\C, \Gamma(\D))\rlap{ .}
  \end{equation}
  Now, to give a partially finite-limit-preserving functor
  $F \colon \ic \rightarrow \D$ is to give:
  \begin{itemize}
  \item For all $X \in \C_\mathbb{A}$ and $i \in \mathbb{A}$ an object
    $F(X,i) \in \D$; and
  \item For all $f \in \C(X,Y)(i,j)$, a map
    $Ff \colon F(X,i) \rightarrow F(Y,j)$ in $\D$,
  \end{itemize}
  functorially with respect to the composition~\eqref{eq:29} and
  composition in $\D$, and subject to the requirement that
  $F\iota_X \in \S_\D[\mathbb{A}]$ for all $X \in \C_\mathbb{A}$. On
  the other hand, to give a $\lp$-functor
  $G \colon \C \rightarrow \Gamma(\D)$ is to give:
  \begin{itemize}
  \item For all $X \in \C_\mathbb{A}$, a functor
    $GX \in \S_\D[\mathbb{A}]$; and
  \item For all $f \in \C(X,Y)(i,j)$, an element of
    $\Gamma(\D)(GX, GY) = \D((GX)i, (GY)j)$, i.e., a map
    $Gf \colon (GX)i \rightarrow (GY)j$ in $\D$,
  \end{itemize}
  subject to the same functoriality condition. Thus, given
  $F \colon \ic \rightarrow \D$, we may define
  $\smash{\bar F} \colon \C \rightarrow \Gamma(\D)$ by taking
  $\bar FX = F\iota_X$ (which is in $\S_\D[\mathbb{A}]$ by assumption)
  and $\smash{\bar Ff} = Ff$; the functoriality is clear. On the other
  hand, given $G \colon \C \rightarrow \Gamma(\D)$, we may define
  $\smash{\bar G} \colon \ic \rightarrow \D$ by taking
  $\bar G(X,i) = (GX)i$ and $\bar Gf = Gf$. Functoriality is again
  clear, but we need to check that
  $\bar G\iota_X \in \S_\D[\mathbb{A}]$ for all $X \in \C_\mathbb{A}$.
  In fact we show that $\bar G\iota_X = GX$, which is in
  $\S_\D[\mathbb{A}]$ by assumption. On objects,
  $\bar G\iota_X(i) = \bar G(X,i) = (GX)i$ as required. On morphisms,
  the compatibility of $G$ with identities in $\C$ and $\Gamma(\D)$
  gives a commuting triangle of sets and functions:
  \begin{equation*}
    \!\!\!\!\!\!\!\!\!\!\!\!\!\!\!\!\!\!\cd[@!C@C-6em]
    {
      & {\mathbb{A}(i,j)} \ar[dl]_-{\iota_X} \ar[dr]^-{\iota_{GX}} \\
      {\C(X,X)(i,j)} \ar[rr]^-{G} & &
      {\Gamma(\D)(GX, GX)(i,j) = \rlap{$ \D(GXi, GXj)$ .}}
    }
  \end{equation*}
  The left-hand path maps $\varphi \in \mathbb{A}(i,j)$ to
  $G\iota_X(\varphi) = \bar G\iota_X(\varphi)$; while by definition of
  $\Gamma(\D)$ the right-hand path maps $\varphi$ to $GX(\varphi)$;
  whence $\bar G\iota_X = GX$ as required. It is clear from the above
  calculations that the assignations $F \mapsto \bar F$ and
  $G \mapsto \bar G$ are mutually inverse, which establishes the
  bijection~\eqref{eq:32} on objects.

  To establish~\eqref{eq:32} on maps, let
  $F_1, F_2 \colon \ic \rightrightarrows \D$. The components of a
  $\lp$-transformation
  $\bar \alpha \colon \bar F_1 \Rightarrow \bar F_2 \colon \C
  \rightarrow \Gamma(\D)$ comprise natural families of functions
  $\bar \alpha_{Xij} \colon \mathbb{A}(i,j) \rightarrow \Gamma(\D)(\bar F_1X,
  \bar F_2X)(i,j) = \D(F_1\iota_X(i), F_2\iota_X(j))$
  satisfying $\lp$-naturality. By Yoneda, each $\bar \alpha_{Xij}$ is
  uniquely determined by elements
  $\bar \alpha_{X,i} = \bar \alpha_{Xii}(\id_i) \in \D(F_1(X,i),
  F_2(X,i))$ satisfying
  $F_2\iota_X(\varphi) \circ \bar \alpha_{X,i} = \bar \alpha_{X,j}
  \circ F_1\iota_X(\varphi)$ for all $\varphi \in \mathbb{A}(i,j)$;
  their $\lp$-naturality is now the requirement that the square
  \begin{equation*}
    \cd[@C+2em]{
      {\C(X,Y)(i,j)} \ar[r]^-{F_1} \ar[d]_{F_2} &
      {\D(F_1(X,i),F_1(Y,j))} \ar[d]^{\bar \alpha_{Y,j} \circ (\thg)} \\
      {\D(F_2(X,i),F_2(Y,j))} \ar[r]^-{(\thg) \circ \bar \alpha_{X,i}} &
      {\D(F_1(X,i),F_2(Y,j))}
    }
  \end{equation*}
  commute for each $X,Y,i,j$. Note that this \emph{implies} the
  earlier condition that
  $F_2\iota_X(\varphi) \circ \bar \alpha_{X,i} = \bar \alpha_{X,j}
  \circ F_1\iota_X(\varphi)$ on taking $X=Y$ and evaluating at
  $\iota_X(\varphi)$; now evaluating at a general element, we get the
  condition that
  $F_2f \circ \bar \alpha_{X,i} = \bar \alpha_{Y,j} \circ F_1f$ for
  all $f \colon (X,i) \rightarrow (Y,j)$ in $\ic$---which says
  precisely that we have a natural transformation
  $\bar \alpha \colon F_1 \Rightarrow F_2 \colon \ic \rightarrow \D$.
  This establishes the bijection~\eqref{eq:32} on morphisms; the
  $2$-naturality in $\D$ is left as an easy exercise for the reader.
\end{proof}

\section{Absolute-tensored $\lp$-categories}
\label{sec:analysis-unit-counit}

In this section, we prove the key technical result of this paper,
Theorem~\ref{thm:3}, which shows that the
$2$-adjunction~\eqref{eq:34} exhibits $\pfl$ as biequivalent
to the full sub-$2$-category of $\lp\text-\cat{CAT}$ on the
absolute-tensored $\lp$-categories.

\subsection{Absolute tensors in $\W$-categories}
\label{sec:absolute-tensors-w}

We begin by characterising absolute tensors in $\W$-categories for an
arbitrary right-closed bicategory $\W$. Here, \emph{right-closedness}
is the condition that, for every $1$-cell $W \in \W(x,y)$ and every
$z \in \W$, the functor
$(\thg) \otimes W \colon \W(y,z) \rightarrow \W(x,z)$ admits a right
adjoint $[W, \thg] \colon \W(x,z) \rightarrow \W(y,z)$. In this
setting, we will show that tensors by a $1$-cell $W$ are absolute if
and only if $W$ is a right adjoint in $\W$. If $\W$ were both left-
and right- closed, this would follow from the characterisation of
enriched absolute colimits given in~\cite{Street1983Absolute}, but in
the absence of left-closedness, we need a different proof. The first
step is the following, which is a special case
of~\cite[Theorem~1.2]{Garner2014Diagrammatic}:

\begin{prop}
  \label{prop:17}
  Let $\W$ be a bicategory, let $\C$ be a $\W$-category,
  let $X \in \C_x$ and let $W \in \W(x,y)$. If $W$ admits the left
  adjoint $W^\ast \in \W(y,x)$, then there is a bijective
  correspondence between data of the following forms:
  \begin{enumerate}[(a)]
  \item A map $u \colon W \to \C(X, Y)$ in $\W(x,y)$ exhibiting $Y$ as
    $W \cdot X$;
  \item Maps $u \colon W \to \C(X, Y)$ in $\W(x,y)$ and
    $u^\ast \colon W^\ast \to \C(Y, X)$ in $\W(y,x)$ rendering
    commutative the squares:
    \begin{equation}
      \cd{
        I_y \ar[r]^\eta \ar[d]_{\iota} & W \otimes W^\ast \ar[d]^{u
          \otimes u^\ast} &
        W^\ast \otimes W \ar[r]^{\varepsilon} \ar[d]_{u^\ast \otimes u} & I_x \ar[d]^{\iota} \\
        \C(Y,Y) \ar@{<-}[r]^-{\ \mu} & \C(X,Y) \otimes \C(Y,X) &
        \C(Y,X) \otimes \C(X,Y) \ar[r]^-{\mu} & \C(X,X)\rlap{ .}
      }\label{eq:4}
    \end{equation}
  \end{enumerate}
\end{prop}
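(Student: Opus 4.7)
My strategy is to extract $u^\ast$ canonically from the tensor universal property of $u$ in one direction, and conversely to build an explicit inverse to the tensor transposition from $u^\ast$. Both directions reduce to mechanical calculations using the triangle identities of $W^\ast \dashv W$ together with the associativity and unit axioms of the $\W$-category $\C$.

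For the implication (a)$\Rightarrow$(b), suppose $u \colon W \to \C(X, Y)$ exhibits $Y$ as $W \cdot X$. I apply the tensor universal property~\eqref{eq:38} with $U = W^\ast$ and $Z = X$, and define $u^\ast \colon W^\ast \to \C(Y, X)$ to be the unique $2$-cell whose transpose is $\iota_X \circ \varepsilon \colon W^\ast \otimes W \to I_x \to \C(X, X)$; this makes the right-hand square of~\eqref{eq:4} commute tautologically. For the left-hand square, both sides are $2$-cells $I_y \to \C(Y, Y)$, so by the universal property (this time with $U = I_y$ and $Z = Y$) it suffices to check equality after the transposition $f \mapsto \mu \circ (f \otimes u)$. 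The transpose of $\iota_Y$ is $u$ itself, by the unit axiom in $\C$. The transpose of $\mu \circ (u \otimes u^\ast) \circ \eta$ can be rewritten, using associativity of $\mu$ and then substitution of $\mu \circ (u^\ast \otimes u) = \iota_X \circ \varepsilon$ (the right-hand square, already in place), into a composite that collapses to $u$ by the unit axiom of $\C$ followed by the triangle identity $(1_W \otimes \varepsilon)\circ(\eta \otimes 1_W) = 1_W$.

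For (b)$\Rightarrow$(a), suppose $u, u^\ast$ satisfy both squares of~\eqref{eq:4}, and write $\Phi(f) \defeq \mu \circ (f \otimes u)$ for the tensor transposition. I propose the candidate inverse
$$\Psi(g) \;\defeq\; \bigl(\,U \cong U \otimes I_y \xrightarrow{1 \otimes \eta} U \otimes W \otimes W^\ast \xrightarrow{g \otimes u^\ast} \C(X, Z) \otimes \C(Y, X) \xrightarrow{\mu} \C(Y, Z)\,\bigr).$$
The equation $\Phi\Psi = \id$ unfolds via middle-four interchange and associativity of $\mu$ to a composite containing the subexpression $\mu \circ (u^\ast \otimes u)$; replacing this by $\iota_X \circ \varepsilon$ using the right-hand square of~\eqref{eq:4}, and then applying the unit axiom of $\C$ together with the triangle identity for $W^\ast \dashv W$, returns $g$. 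Dually, $\Psi \Phi = \id$ unfolds to a composite containing the subexpression $\mu \circ (u \otimes u^\ast) \circ \eta$; replacing this by $\iota_Y$ using the left-hand square and applying the unit axiom of $\C$ once more returns $f$.

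The main obstacle is essentially bookkeeping: once $\Phi$, $\Psi$, and $u^\ast$ are in hand, the two verifications in the reverse direction are purely formal, but must be carried out carefully with respect to the associativity and unit constraints of $\W$. The real conceptual input is concentrated in the first paragraph---the observation that the universal property of $u$ forces a canonical choice of $u^\ast$, after which the two squares of~\eqref{eq:4} become, respectively, a tautology and a derived consequence expressed on the $\C$-side rather than the $\W$-side.
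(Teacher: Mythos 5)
Your proposal is correct and follows essentially the same route as the paper's proof: $u^\ast$ is obtained as the transpose of $\iota_X\circ\varepsilon$ (making the right square commute by construction), the left square is verified by injectivity of the transposition, and the candidate inverse $\Psi$ you write down is exactly the paper's formula~\eqref{eq:13}, verified using the same combination of the two squares, the triangle identities, and the $\W$-category axioms.
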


\begin{proof}
  Given (a), applying surjectivity in~\eqref{eq:38} to
  $\iota_X \circ \varepsilon \colon W^\ast \otimes W \rightarrow I_x
  \rightarrow \C(X,X)$ yields a unique map
  $u^\ast \colon W^\ast \rightarrow \C(Y,X)$ making the square right
  above commute. To see that the left square also commutes, it
  suffices by injectivity in~\eqref{eq:38} to check that the sides become
  equal after tensoring on the right with $u$ and postcomposing with
  $\mu \colon \C(Y,Y) \otimes \C(X,Y) \rightarrow \C(X,Y)$. This
  follows by a short calculation using commutativity in the right
  square and the triangle identities.

  To complete the proof, it remains to show that if $u$ and $u^\ast$
  are given as in (b), then $u$ exhibits $Y$ as $W \cdot X$. Thus,
  given $g \colon U \otimes W \rightarrow \C(X,Z)$, we must show that
  $g = \mu \circ (f \otimes u)$ as in~\eqref{eq:38} for a unique
  $f \colon U \rightarrow \C(Y,Z)$. We may take $f$ to be
  \begin{equation}
    U \xrightarrow{1 \otimes \eta} U \otimes W \otimes
    W^\ast \xrightarrow{g \otimes u^\ast} \C(X,Z)
    \otimes \C(Y,X) \xrightarrow{\mu} \C(Y,Z)\rlap{ ;}\label{eq:13}
  \end{equation}
  now that $g = \mu \circ (f \otimes u)$ follows on rewriting with
  the right-hand square of~\eqref{eq:4}, the triangle identities and the
  $\W$-category axioms for $\C$. Moreover, if $f' \colon U \to \C(Y,Z)$
  also satisfies $g = \mu \circ (f' \otimes u)$, then substituting
  into~\eqref{eq:13} gives
  \begin{equation*}
    f = U \xrightarrow{1 \otimes \eta} U \otimes W \otimes
    W^\ast \xrightarrow{f' \otimes u \otimes u^\ast} \C(Y,Z) \otimes \C(X,Y)
    \otimes \C(Y,X) \xrightarrow{\mu \circ (1 \otimes \mu)} \C(Y,Z)
  \end{equation*}
  which is equal to $f'$ via the category axioms for $\C$ and the left
  square of~\eqref{eq:4}.
\end{proof}

Using this result, we may now prove:

\begin{prop}
  \label{prop:27}
  Let $\W$ be a right-closed bicategory. Tensors by $W \in \W(x,y)$
  are absolute if and only if the $1$-cell $W$ admits a left adjoint
  in $\W$.
\end{prop}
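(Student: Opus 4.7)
The plan is to prove the two implications separately.

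For the ($\Leftarrow$) direction, suppose $W$ has a left adjoint $W^\ast$ in $\W$. Given any $\W$-category $\C$ and any tensor $u \colon W \to \C(X, V)$ exhibiting $V$ as $W \cdot X$, Proposition~\ref{prop:17}~(a)$\Rightarrow$(b) produces a companion $u^\ast \colon W^\ast \to \C(V, X)$ making the two coherence squares in~\eqref{eq:4} commute. For any $\W$-functor $F \colon \C \to \D$, the composites $F_{X,V} \circ u$ and $F_{V,X} \circ u^\ast$ satisfy the same squares in $\D$, since $F$ preserves both composition $\mu$ and identities $\iota$ (and neither $\eta$ nor $\varepsilon$ is touched by $F$). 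Applying Proposition~\ref{prop:17}~(b)$\Rightarrow$(a) in $\D$ then shows $F_{X,V} \circ u$ exhibits $FV$ as $W \cdot FX$, so $F$ preserves the tensor.

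For the ($\Rightarrow$) direction, suppose every tensor by $W$ is absolute. The strategy is to nominate $W^\ast = [W, I_x]$ as the candidate left adjoint, together with the right-closure counit $\varepsilon \colon W^\ast \otimes W \to I_x$, and then to manufacture the unit $\eta \colon I_y \to W \otimes W^\ast$ by applying absoluteness to a carefully chosen $\W$-functor. To exhibit a tensor by $W$ in the first place, I would construct a test $\W$-category $\K$ whose objects are $1$-cells $P \in \W(x, z)$ of extent $z$, with hom-objects $\K(P, Q) = [P, Q]$ and with composition and identities induced by transposition under the right-closure structure of $\W$. A direct check shows $\K$ is a well-defined $\W$-category in which $W \cdot I_x = W$ is a tensor, with unit the canonical isomorphism $\id_W \colon W \to [I_x, W] \cong W$.

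By absoluteness, this tensor must be preserved by every $\W$-functor $F \colon \K \to \D$. Applying this preservation to an appropriately chosen $F$---into a target $\D$ encoding on two distinguished objects of extents $x$ and $y$ the expected structure of an adjunction---should yield the unit $\eta$, and the triangle identities should follow from the universal property of the absolute tensor together with the naturality of the right-closure counit. The main obstacle is precisely this: picking the right target $\D$ and $\W$-functor $F$, and verifying that the tensor-preservation condition extracts adjunction data satisfying both triangle identities. In the symmetric monoidal setting of~\cite{Street1983Absolute}, one uses the $\V$-Yoneda embedding into presheaves, but the non-left-closedness of $\W$ precludes constructing the analogous $\W$-enriched presheaf $\W$-category, so a more hands-on argument internal to $\K$ is required---this is where the ``some adaptations to the proof'' alluded to before Proposition~\ref{prop:2} need to be carried out.
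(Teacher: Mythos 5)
Your ($\Leftarrow$) direction is correct and is exactly the paper's argument: express the tensor data via Proposition~\ref{prop:17}(b), observe that a $\W$-functor preserves the two squares in~\eqref{eq:4}, and apply Proposition~\ref{prop:17}(b)$\Rightarrow$(a) in the target. Your setup for ($\Rightarrow$) is also on the right track---the candidate dual $[W,I_x]$, the evaluation map as counit, and the test $\W$-category $\K$ with objects the $1$-cells out of $x$ and hom-objects $[P,Q]$ (this is the paper's $x/\W$, obtained from the closed representation $\W(x,\thg)$ via Proposition~\ref{prop:29}, and it is tensored with $W\cdot P = W \otimes P$).

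However, the step you flag as ``the main obstacle'' is precisely the step that constitutes the proof, and the route you sketch for it does not work: you cannot map into a target $\D$ that ``encodes on two distinguished objects the expected structure of an adjunction,'' because the existence of such a $\D$ containing an adjunction $W^\ast \dashv W$ is exactly what is to be established---as stated this is circular. The paper's resolution is to apply absoluteness not to a bespoke target but to the internal-hom $\W$-functors that right-closedness already provides: for any $Z \in \W(a,b)$ there is a $\W$-functor $[Z,\thg] \colon a/\W \rightarrow b/\W$, acting on objects by $P \mapsto [Z,P]$ and on hom-objects by transposing composition. Preservation of the tensor $W \cdot P = W\otimes P$ by this functor says exactly that the canonical comparison $\theta_{Z,P} \colon W \otimes [Z,P] \rightarrow [Z, W\otimes P]$ is invertible. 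Taking $Z = W$ and $P = I_x$ yields $W \otimes [W,I_x] \cong [W, W\otimes I_x]$, and the unit $\eta \colon I_y \rightarrow W \otimes [W,I_x]$ is defined as the unique map whose image under $\theta_{W,I_x}$ transposes to $\rho_W\lambda_W \colon I_y \otimes W \rightarrow W \otimes I_x$; one triangle identity is immediate from this defining condition and the other follows by transposing under the closure adjunction. Without identifying $[W,\thg]$ as the $\W$-functor to which absoluteness is applied, the argument is missing its essential idea.
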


\begin{proof}
  If $W$ admits a left adjoint, then the data for a tensor by $W$ can
  be expressed as in Proposition~\ref{prop:17}(b); since these data
  are clearly preserved by any $\W$-functor, tensors by $W$ are
  absolute. Conversely, suppose that tensors by $W$ are absolute; we
  will show that $W$ admits the left dual $[W,I_x] \in \W(y,x)$. The
  counit $\varepsilon$ is the evaluation map
  $\mathrm{ev} \colon [W, I_x] \otimes W \rightarrow I_x$, and it
  remains only to define the unit.

  For each $a \in \W$, we have the $\W$-representation
  $\W(a, \thg) \colon \W \rightarrow \cat{CAT}$ which is closed since
  $\W$ is right-closed. Thus, by the construction of
  Proposition~\ref{prop:29}, there is a tensored $\W$-category
  $\overline{\W(a, \thg)} = a/\W$ whose objects of extent $b$ are
  $1$-cells $a \rightarrow b$, whose hom-objects are
  $(a/\W)(X,Y) = [X,Y]$, and whose tensors are given by
  $Y \cdot X = Y \otimes X$.

  Now, for any $1$-cell $Z \in \W(a,b)$, there is a $\W$-functor
  $[Z, \thg] \colon a/\W \rightarrow b/\W$ given on objects by
  $X \mapsto [Z,X]$ and with action
  $[X,Y] \rightarrow [ [Z,X], [Z,Y] ]$ on hom-objects obtained by
  transposing the composition map in $a / \W$. Since tensors by $W$
  are absolute, they are preserved by
  $[Z, \thg] \colon a/\W \rightarrow b/\W$; it follows that the map
  \begin{equation*}
    \theta_{ZX} \colon W \otimes [Z,X] \rightarrow [Z, W \otimes X]
  \end{equation*}
  in $\W(b,z)$ given by transposing
  $W \otimes \mathrm{ev} \colon W \otimes [Z,X] \otimes Z \rightarrow W
  \otimes X$ is invertible for all $Z \in \W(a,b)$ and
  $X \in \W(a, x)$. In particular, we have
  $\theta_{W,I_x} \colon W \otimes [W,I_x] \cong [W,W \otimes I_x]$ and
  so a unique $\eta \colon I_y \rightarrow W \otimes [W,I_x]$ such that
  $\theta_{VI} \circ \eta$ is the transpose of the morphism
  $\rho_W\lambda_W \colon I_y \otimes W \rightarrow W \rightarrow W
  \otimes I_x$. This condition immediately implies the triangle
  identity $(W \otimes \varepsilon) \circ (\eta \otimes W) = 1$, and
  implies the other triangle identity
  $(\varepsilon \otimes [W,I_x]) \circ ([W,I_x] \otimes \eta) = 1$
  after transposing under adjunction and using bifunctoriality of
  $\otimes$.
\end{proof}

\subsection{Absolute $\lp$-tensors}
\label{sec:absolute-lp-tensors}

Using the above result, we may now characterise the absolute-tensored
$\lp$-categories via the construction $\int$ of
Proposition~\ref{prop:28}.

\begin{prop}
  \label{prop:30}
  A $\lp$-category $\C$ is absolute-tensored if and only if, for all
  $X \in \C_\mathbb{B}$ and all
  $F \colon \mathbb{A} \rightarrow \mathbb{B}$ in $\fl$, there
  exists $Y \in \C_\mathbb{A}$ and a natural isomorphism
  \begin{equation}\label{eq:19}
    \cd[@-0.5em@!C]{
      \mathbb{A} \ar[rr]^-{F} \ar[dr]_-{\iota_Y} &
      \ltwocello{d}{\upsilon}& \mathbb{B} \ar[dl]^-{\iota_X} \\ & \ic
    }
  \end{equation}
\end{prop}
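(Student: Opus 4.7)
The plan is to combine Propositions~\ref{prop:27} and~\ref{prop:17} with a two-variable Yoneda calculation that translates between maps of lex profunctors and natural transformations in $[\mathbb{A}, \ic]$.

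First I would identify which $1$-cells of $\lp$ admit left adjoints. A direct check (or the standard profunctor-theoretic result, transported along the biequivalence $\lp \simeq \lfp$) shows that the $1$-cells $W \in \lp(\mathbb{B}, \mathbb{A})$ admitting left adjoints are, up to isomorphism, exactly those of the form $W(b, a) = \mathbb{B}(b, Fa)$ for some $F \in \fl(\mathbb{A}, \mathbb{B})$, with left adjoint $W^{\ast}(a, b) = \mathbb{B}(Fa, b)$; after the coYoneda identification $(W \otimes W^{\ast})(a, a') \cong \mathbb{B}(Fa, Fa')$ and the analogous coend description of $W^{\ast} \otimes W$, the unit $\eta$ of this adjunction is simply the action of $F$ on morphisms, and the counit $\varepsilon$ is composition in $\mathbb{B}$. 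So by Proposition~\ref{prop:27}, $\C$ is absolute-tensored iff the tensor $W \cdot X \in \C_\mathbb{A}$ exists for each such $W$ (equivalently, for each $F \in \fl(\mathbb{A}, \mathbb{B})$) and each $X \in \C_\mathbb{B}$.

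Next I would translate the tensor-existence condition via~\eqref{eq:31}. A candidate unit $u \colon W \to \C(X, Y)$ in $\lp(\mathbb{B}, \mathbb{A})$ has components $u_{b,a} \colon \mathbb{B}(b, Fa) \to \ic((X, b), (Y, a))$, and since $W(\thg, a) = \mathbb{B}(\thg, Fa)$ is representable, Yoneda in the first variable shows that giving $u$ is equivalent to giving elements $\upsilon_a \defeq u_{Fa, a}(1_{Fa}) \in \ic(\iota_X F(a), \iota_Y(a))$; the remaining naturality of $u$ in $a$ then says precisely that the $\upsilon_a$ assemble into a natural transformation $\upsilon \colon \iota_X F \to \iota_Y$ in $[\mathbb{A}, \ic]$. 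Symmetrically, a map $u^{\ast} \colon W^{\ast} \to \C(Y, X)$ corresponds bijectively to a natural transformation $\upsilon^{\ast} \colon \iota_Y \to \iota_X F$.

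Finally, by Proposition~\ref{prop:17}, giving the absolute tensor $Y = W \cdot X$ is equivalent to giving such $u$ and $u^{\ast}$ satisfying the two squares of~\eqref{eq:4}. Unpacking these squares using the coend formulae above and~\eqref{eq:30}, and then evaluating at pairs of identity morphisms in $\mathbb{B}$, collapses them to the componentwise equations $\upsilon^{\ast}_a \circ \upsilon_a = 1_{(X, Fa)}$ and $\upsilon_a \circ \upsilon^{\ast}_a = 1_{(Y, a)}$ in $\ic$; conversely, naturality of $\upsilon$ and $\upsilon^{\ast}$ ensures that these componentwise identities recover the full squares. Thus the tensor exists precisely when $\upsilon$ is a natural isomorphism with inverse $\upsilon^{\ast}$, i.e., when the isomorphism~\eqref{eq:19} obtains; quantifying over $F$ and $X$ yields the proposition. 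The main technical obstacle will be keeping the four variances straight in the Yoneda/coend bookkeeping; the classification of left-adjoint $1$-cells in $\lp$ is the other substantive ingredient, but it is a routine consequence of the analogous fact in the bicategory of ordinary profunctors.
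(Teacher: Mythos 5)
Your proposal is correct and follows essentially the same route as the paper's own proof: reduce via Proposition~\ref{prop:27} to tensors by right adjoint $1$-cells, identify these (up to isomorphism, using that idempotents split in a finitely complete category) with the profunctors $F_\ast(b,a)=\mathbb{B}(b,Fa)$ for $F\in\fl(\mathbb{A},\mathbb{B})$, and then use Proposition~\ref{prop:17} together with the Yoneda lemma and~\eqref{eq:31} to translate the pair $(u,u^\ast)$ and the squares~\eqref{eq:4} into a mutually inverse pair of natural transformations $\upsilon$, $\upsilon^\ast$. The only differences are cosmetic: you make the unit and counit of $F^\ast\dashv F_\ast$ explicit before unpacking~\eqref{eq:4}, where the paper leaves that computation implicit.
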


\begin{proof}
  We write $(\thg)_\ast \colon \fl^\mathrm{op} \rightarrow \lp$
  for the identity-on-objects homomorphism sending
  $F \colon \mathbb{A} \rightarrow \mathbb{B}$ to the lex profunctor
  $F_\ast \colon \mathbb{B} \tor \mathbb{A}$ with
  $F_\ast(b,a) = \mathbb{B}(b,Fa)$. Each $F_\ast$ has a left adjoint
  $F^\ast$ in $\lp$ with $F^\ast(a,b) = \mathbb{B}(Fa,b)$, and---as
  all idempotents split in a finitely complete category---the usual
  analysis of adjunctions of profunctors adapts to show that, within
  isomorphism, \emph{every} right adjoint $1$-cell in $\lp$ arises
  thus. So by Proposition~\ref{prop:27}, a $\lp$-category $\C$ is
  absolute-tensored just when it admits all tensors by $1$-cells
  $F_\ast$.

  By Proposition~\ref{prop:17}, this is equally to say that, for all
  $X \in \C_\mathbb{B}$ and $F \in \fl(\mathbb{A}, \mathbb{B})$, we
  can find $Y \in \C_\mathbb{A}$ and maps
  $u \colon F_\ast \rightarrow \C(X, Y)$ and
  $u^\ast \colon F^\ast \rightarrow \C(Y,X)$ rendering commutative
  both squares in~\eqref{eq:4}. To complete the proof, it suffices to
  show that the data of $u$ and $u^\ast$ are equivalent to those of an
  invertible transformation $\upsilon$ as in~\eqref{eq:19}. Now, $u$\pagebreak~
  comprises a natural family of maps
  $\mathbb{B}(j,Fi) \rightarrow \C(X,Y)(j, i)$; equally, by Yoneda,
  elements
  $\upsilon_i \in \C(X, Y)(Fi, i) = \ic(\iota_X(Fi), \iota_Y(i))$
  dinatural in $i \in \mathbb{A}$; or equally, the components of a
  natural transformation $\upsilon$ as in~\eqref{eq:19}. Similar
  arguments show that giving
  $u^\ast \colon F^\ast \rightarrow \C(Y,X)$ is equivalent to giving a
  natural transformation
  $\upsilon^\ast \colon \iota_Y \Rightarrow \iota_X F$, and that
  commutativity in the two squares of~\eqref{eq:4} is equivalent to
  the condition that $\upsilon$ and $\upsilon^\ast$ are mutually
  inverse.
\end{proof}

We now have all the necessary ingredients to prove:

\begin{thm}
  \label{thm:3}
  The $2$-functor
  $\Gamma \colon \pfl \rightarrow \lp\text-\cat{CAT}$
  of~\eqref{eq:34} is an equivalence on hom-categories, and its
  biessential image comprises the absolute-tensored $\lp$-categories.
  Thus $\Gamma$ exhibits $\pfl$ as biequivalent to
  the full and locally full sub-$2$-category of $\lp\text-\cat{CAT}$
  on the absolute-tensored $\lp$-categories.
\end{thm}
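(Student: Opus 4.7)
The plan is to exploit the $2$-adjunction $\int \dashv \Gamma$ of Proposition~\ref{prop:28} and separately establish three claims: \textbf{(a)} each $\Gamma(\C)$ is absolute-tensored; \textbf{(b)} for any absolute-tensored $\lp$-category $\D$, the unit $\eta_\D \colon \D \to \Gamma(\int\!\D)$ is a $\lp$-equivalence; and \textbf{(c)} for each $\C \in \pfl$, the counit $\varepsilon_\C \colon \int\!\Gamma(\C) \to \C$ is an equivalence in $\pfl$. Claims (a) and (b) together identify the biessential image of $\Gamma$ with the absolute-tensored $\lp$-categories, while (c) forces $\Gamma$ to be locally an equivalence, via the Yoneda-style observation that $\Gamma_{\C,\D}$ factors as the adjunction isomorphism $\pfl(\C,\D) \cong \lp\text-\cat{CAT}(\Gamma(\C),\Gamma(\D))$ composed with precomposition by~$\varepsilon_\C$. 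Tracing through the bijection~\eqref{eq:32}, $\varepsilon_\C$ is the functor $(X,i) \mapsto Xi$, acting as the identity on homs since $\int\!\Gamma(\C)((X,i),(Y,j)) = \Gamma(\C)(X,Y)(i,j) = \C(Xi,Yj)$, while $\eta_\D$ sends $X \in \D_\mathbb{A}$ to $\iota_X \in \S_{\int\!\D}[\mathbb{A}]$ and is again the identity on hom-objects by~\eqref{eq:31}.

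For (a), given $X \in \S_\C[\mathbb{B}]$ and $F \in \fl(\mathbb{A},\mathbb{B})$, I would take $Y = XF$, which lies in $\S_\C[\mathbb{A}]$ by closure condition~(i) of Definition~\ref{def:12}; the identity morphisms $1_{XFi} \in \C$ then assemble into a natural isomorphism $\iota_X F \To \iota_Y$ in $\int\!\Gamma(\C)$, so Proposition~\ref{prop:30} applies. For (b), since $\eta_\D$ is already the identity on hom-objects, only essential surjectivity at each extent $\mathbb{A}$ remains: any $G \in \S_{\int\!\D}[\mathbb{A}]$ satisfies $G \cong \iota_Y F$ by~\eqref{eq:35}, and Proposition~\ref{prop:30} applied to the absolute-tensored $\D$ yields $Z \in \D_\mathbb{A}$ with $\iota_Z \cong \iota_Y F \cong G$, whence $\eta_\D(Z) = \iota_Z \cong G$ as required.

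For (c), $\varepsilon_\C$ is essentially surjective by sieve condition~(iii), fully faithful by the description above, and partially finite-limit-preserving, since an object of $\S_{\int\!\Gamma(\C)}$ has the form $\iota_Y F$ with $Y \in \S_\C$ and $F \in \fl$, and $\varepsilon_\C(\iota_Y F) = YF \in \S_\C$ by sieve condition~(i). Using the axiom of choice, I would pick for each $c \in \C$ some $X_c \in \S_\C[\mathbb{A}_c]$ and $i_c \in \mathbb{A}_c$ with $X_c i_c = c$, producing an ordinary pseudoinverse $\delta_\C \colon \C \to \int\!\Gamma(\C)$; to upgrade this to an equivalence in $\pfl$, I must verify that $\delta_\C$ is itself partially finite-limit-preserving. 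For any $H \in \S_\C[\mathbb{A}]$, the identities $1_{Hi}$ assemble into a natural isomorphism $\delta_\C H \cong \iota_H$, and since $\iota_H \in \S_{\int\!\Gamma(\C)}[\mathbb{A}]$ by~\eqref{eq:35}, closure condition~(ii) forces $\delta_\C H \in \S_{\int\!\Gamma(\C)}[\mathbb{A}]$. The main obstacle is precisely this bookkeeping in~(c): the sieve-theoretic data must be carefully tracked and the closure conditions on $\S_{\int\!\Gamma(\C)}$ invoked at just the right moments; everything else reduces to a direct application of Proposition~\ref{prop:30} together with the adjunction.
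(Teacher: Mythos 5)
Your proposal is correct and follows essentially the same route as the paper: the counit $\varepsilon_\C$ is shown to be an equivalence in $\pfl$ (the paper phrases your verification that $\delta_\C$ is partially finite-limit-preserving as ``$\varepsilon_\C$ is sieve-reflecting''), and the biessential image is identified with the absolute-tensored $\lp$-categories by exactly the application of Proposition~\ref{prop:30} that you describe, the paper packaging your steps (a) and (b) into the single claim that $\eta_\C$ is an equivalence if and only if $\C$ is absolute-tensored. The only cosmetic slip is that the ``adjunction isomorphism'' in your factorisation of $\Gamma_{\C,\D}$ should read $\pfl(\int\!\Gamma(\C),\D) \cong \lp\text-\cat{CAT}(\Gamma(\C),\Gamma(\D))$, precomposition with $\varepsilon_\C$ supplying the passage from $\pfl(\C,\D)$.
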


\begin{proof}
  By a standard argument, to say that $\Gamma$ is an equivalence on
  homs is equally to say that each counit component
  $\varepsilon_\C \colon \int \Gamma \C \rightarrow \C$
  of~\eqref{eq:34} is an equivalence in $\pfl$. Now, from the
  definitions, the category $\int \Gamma \C$ has:
  \begin{itemize}
  \item \textbf{Objects} being pairs
    $(X \in \S[\mathbb{A}], i \in \mathbb{A})$;
  \item \textbf{Morphisms} $(X,i) \rightarrow (Y,j)$ being maps
    $Xi \rightarrow Yj$ in $\C$;
  \item \textbf{Composition and identities} inherited from $\C$,
  \end{itemize}
  while $\varepsilon_\C \colon \int \Gamma \C \rightarrow \C$ sends
  $(X,i)$ to $Xi$ and is the identity on homsets. So clearly
  $\varepsilon_\C$ is fully faithful; while condition (iii) for a
  left-exact sieve ensures that it is essentially surjective, and so
  an equivalence of categories. However, for $\varepsilon_\C$ to be an
  equivalence in $\pfl$, its pseudoinverse must also be
  partially finite-limit-preserving. This is easily seen to be equivalent to
  $\varepsilon_\C$ being sieve-reflecting; but for each
  $X \in \S[\mathbb{A}]$, the functor
  $\iota_X \colon \mathbb{A} \rightarrow \D$ sending $i$ to $(X,i)$
  and $\varphi$ to $X\varphi$ is by definition in the sieve $\S_{\int\!\Gamma\C}$, and
  clearly $\varepsilon_\C \circ \iota_X = X$.

  This shows that $\Gamma$ is locally an equivalence; as for its
  biessential image, this comprises just those
  $\C \in \lp\text-\cat{CAT}$ at which the unit
  $\eta_\C \colon \C \rightarrow \Gamma(\ic)$ is an equivalence of
  $\lp$-categories. Now, from the definitions, $\Gamma(\ic)$ has:
  \begin{itemize}
  \item \textbf{Objects} of extent $\mathbb{A}$ being functors
    $\mathbb{A} \rightarrow \ic$ in the left-exact sieve
    $\smash{\S_{\int\!\C}}$;
  \item \textbf{Hom-objects} given by
    $\Gamma(\ic)(X,Y) = \ic(X\thg, Y\thg)$;
  \item \textbf{Composition and identities} inherited from $\ic$,
  \end{itemize}
  while the $\lp$-functor $\eta_\C \colon \C \rightarrow \Gamma(\ic)$
  is given on objects by $X \mapsto \iota_X$, and on homs by the
  equality $\C(X,Y) = (\ic)(\iota_X\thg, \iota_Y\thg)$
  of~\eqref{eq:31}; in particular, it is always fully faithful. To
  characterise when it is essentially surjective, note first that
  isomorphisms in $\Gamma(\ic)_\mathbb{A}$ are equally natural
  isomorphisms in $[\mathbb{A}, \ic]$. Now as objects of
  $\Gamma(\ic)_\mathbb{A}$ are elements of
  $\S_{\int\!\C}[\mathbb{A}]$, and since by~\eqref{eq:35} every such
  is isomorphic to $\iota_X F$ for some $X \in \C_\mathbb{B}$ and
  $F \in \fl(\mathbb{A}, \mathbb{B})$, we see that $\eta_\C$ is
  essentially surjective precisely when for all $X \in \C_\mathbb{B}$
  and $F \in \fl(\mathbb{A}, \mathbb{B})$ there exists
  $Y \in \C_\mathbb{A}$ and a natural isomorphism
  $\upsilon \colon \iota_X F \cong \iota_Y$: which by
  Proposition~\ref{prop:30}, happens precisely when $\C$ admits all
  absolute tensors.
\end{proof}

We will use this result in the sequel to freely identify
absolute-tensored $\lp$-categories with partially finitely complete
categories; note that, on doing so, the left $2$-adjoint
$\int \colon \lp\text-\cat{CAT} \rightarrow \pfl$
of~\eqref{eq:34} provides us with a description of the \emph{free
  completion} of an $\lp$-category under absolute tensors.

\section{Lawvere $\A$-theories and their models}
\label{sec:lawv-lp-categ}

We are now ready to give our $\lp$-categorical account of Lawvere
$\A$-theories and their models. We will identify each Lawvere $\A$-theory
with what we term a \emph{Lawvere $\lp$-category on $\A$}, and will
identify the category of models with a suitable category of
$\lp$-enriched functors.

\subsection{Lawvere $\A$-theories as enriched categories}
\label{sec:lawv-lp-categ-1}

Lawvere $\lp$-categories will involve certain absolute-tensored
$\lp$-categories, which in light of Theorem~\ref{thm:3}, we may work
with in the equivalent guise of partially finitely complete
categories. In giving the following definition, and throughout the
rest of this section, we view the finitely complete $\af$ as being
partially finitely complete as in Example~\ref{ex:1}.

\begin{defi}
  \label{def:14}
  Let $\A$ be an lfp category. A \emph{Lawvere $\lp$-category over
    $\A$} comprises a partially finitely complete category $\L$
  together with a map $J \colon \af \rightarrow \L$ in $\pfl$
  which is identity-on-objects and sieve-reflecting. A morphism of
  Lawvere $\lp$-categories is a commuting triangle in $\pfl$.
\end{defi}

\begin{prop}
  \label{prop:26}
  For any locally finitely presentable $\A$, the category of
  Lawvere $\A$-theories is isomorphic to the category of Lawvere
  $\lp$-categories over $\A$.
\end{prop}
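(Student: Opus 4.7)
The plan is to exhibit the correspondence as a pair of mutually inverse assignments on objects and morphisms, essentially using the constructions of Examples~\ref{ex:1} and~\ref{ex:2}. Throughout I would regard $\af$ as partially finitely complete in the sense of Example~\ref{ex:1}, so that $\S_{\af}[\mathbb{A}] = \fl(\mathbb{A}, \af)$.

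Given a Lawvere $\A$-theory $J \colon \af \to \L$, I equip $\L$ with the sieve $\S_\L$ generated by $J$ as in Example~\ref{ex:2}, where it is already verified that $\S_\L$ is a left-exact sieve (condition~(iii) using that $J$ is identity-on-objects). The functor $J$ is then identity-on-objects by hypothesis, partially finite-limit-preserving because $G \in \fl(\mathbb{A}, \af)$ implies $JG \in \S_\L$ directly from~\eqref{eq:9}, and sieve-reflecting because every $F \in \S_\L$ is by definition isomorphic to $JG$ for some $G \in \S_{\af}[\mathbb{A}]$. Conversely, given a Lawvere $\lp$-category $(\L, \S_\L, J)$, I simply forget the sieve; the resulting $J \colon \af \to \L$ is identity-on-objects by hypothesis, and preserves finite limits since partial finite-limit-preservation applied to $\mathrm{id}_{\af} \in \S_{\af}[\af]$ yields $J \in \S_\L[\af]$, and every element of a left-exact sieve is finite-limit-preserving by Definition~\ref{def:12}.

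These assignments are inverse at the level of objects: one direction is trivial since the underlying functor is unchanged, while the other requires showing that, in a Lawvere $\lp$-category, the given sieve $\S_\L$ coincides with the sieve generated by $J$ via~\eqref{eq:9}. One inclusion follows from partial finite-limit-preservation together with closure under isomorphism (condition~(ii)); the other is precisely the sieve-reflecting condition. At the level of morphisms, both sides are commuting triangles under $\af$, and any functor $H \colon \L \to \L'$ with $HJ = J'$ is automatically a morphism in $\pfl$ with respect to the generated sieves, since it sends each generator $JG \in \S_\L$ to $HJG = J'G \in \S_{\L'}$.

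The only delicate point is verifying that the sieve-reflecting axiom of Definition~\ref{def:14} is exactly what is needed to make the reverse assignment recover the generated sieve on the nose; this is what ensures we obtain an isomorphism of categories rather than merely an equivalence, matching the statement of the proposition.
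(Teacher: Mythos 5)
Your proof is correct and follows essentially the same route as the paper: equip $\L$ with the sieve generated by $J$ as in Example~\ref{ex:2} in one direction, and in the other use $J\in\S_\L$ (from partial finite-limit-preservation against the maximal sieve on $\af$) together with sieve-reflection to see that the given sieve must be exactly the generated one. The paper's proof is merely a terser version of the same argument, so nothing further is needed.
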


\begin{proof}
  Any Lawvere $\A$-theory $J \colon \af \rightarrow \L$ can be viewed
  as a Lawvere $\lp$-category over $\A$ as in Example~\ref{ex:2}; it
  is moreover clear that under this assignation, maps of Lawvere
  $\A$-theories correspond bijectively with maps of Lawvere
  $\lp$-categories. It remains to show that each Lawvere
  $\lp$-category $J \colon \af \rightarrow \L$ over $\A$ arises from a
  Lawvere $\A$-theory. Because in this context, $\af$ is equipped with
  the maximal left-exact sieve, the fact that $J$ is a morphism in
  $\pfl$ is equivalent to the condition that $J \in \S_\L$---so that,
  in particular, $J$ is finite-limit-preserving. Moreover, the fact of
  $J$ being sieve-reflecting implies that $\S_\L$ must be exactly the
  left-exact sieve~\eqref{eq:9} generated by $J$.
\end{proof}

\subsection{Models for Lawvere $\A$-theories as enriched functors}

We now describe how models for a Lawvere $\A$-theory can be understood
in $\lp$-categorical terms. Recall from
Section~\ref{sec:algebr-finit-monads} that we defined $\S$ to be the
$\lp$-category whose objects of extent $\mathbb{A}$ are
finite-limit-preserving functors $\mathbb{A} \rightarrow \cat{Set}$,
and whose hom-objects are given by $\S(X,Y)(i,j) = \cat{Set}(Xi,Yj)$.
By inspection of Definition~\ref{def:17}, this is equally the
$\lp$-category $\Gamma(\cat{Set})$ when $\cat{Set}$ is seen as
partially finitely complete as in Example~\ref{ex:1}.

\begin{prop}
  \label{prop:32}
  For any locally finitely presentable category $\A$, the
  identification of Lawvere $\A$-theories with Lawvere
  $\lp$-categories on $\A$ fits into a triangle, commuting up to
  pseudonatural equivalence:
  \begin{equation*}
    \cd[@!C@C-5em]{
      \cat{Law}(\A)^\mathrm{op}
      \ar[dr]_-{(\thg)\text-\cat{Mod}}\ar[rr]^(0.5){} & \twosim{d} & \sh{r}{1.2em}{(\lp\text-\cat{CAT})^\mathrm{op}}
      \ar[dl]^-{\ \ \ \lp\text-\cat{CAT}(\thg, \S)} \\ & \cat{CAT} &
    }
  \end{equation*}
  wherein the horizontal functor is that sending a Lawvere $\A$-theory
  $J \colon \af \rightarrow \L$ to the $\lp$-category $\Gamma(\L)$.
\end{prop}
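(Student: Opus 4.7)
The plan is to reduce the claim to a concrete calculation by combining the $2$-adjunction $\int \dashv \Gamma$ of Proposition~\ref{prop:28} with the biequivalence provided by Theorem~\ref{thm:3}, and then unpacking what a $\pfl$-morphism into $\cat{Set}$ actually is. For a Lawvere $\A$-theory $J \colon \af \rightarrow \L$, viewed as a Lawvere $\lp$-category on $\A$, the hom-category we must compute is $\lp\text-\cat{CAT}(\Gamma(\L), \S)$; but since $\S = \Gamma(\cat{Set})$ when $\cat{Set}$ is given its maximal left-exact sieve, the adjunction isomorphism~\eqref{eq:32} yields
\[
  \lp\text-\cat{CAT}(\Gamma(\L), \S) \;\cong\; \pfl(\textstyle\int\!\Gamma(\L), \cat{Set})\rlap{ .}
\]
Applying Theorem~\ref{thm:3}, whose proof exhibits the counit $\varepsilon_\L \colon \int\!\Gamma(\L) \to \L$ as an equivalence in $\pfl$, then gives a further equivalence $\pfl(\int\!\Gamma(\L), \cat{Set}) \simeq \pfl(\L, \cat{Set})$.

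Next, I would unpack $\pfl(\L, \cat{Set})$ directly. An object is a functor $F \colon \L \rightarrow \cat{Set}$ such that $FX$ preserves finite limits for every $X \in \S_\L$. By Proposition~\ref{prop:26}, the sieve $\S_\L$ is exactly that generated by $J$, so every $X \in \S_\L$ is isomorphic to $JG$ for some finite-limit-preserving $G$, and hence the condition reduces to the single requirement that $FJ \colon \af \rightarrow \cat{Set}$ preserves finite limits---precisely the definition of a model. On morphisms, $\pfl$ admits all natural transformations as $2$-cells, which matches the definition of morphisms of models. Thus $\pfl(\L, \cat{Set}) \cong \cat{Mod}(\L)$ as categories, and composing with the previous equivalence supplies the desired $\lp\text-\cat{CAT}(\Gamma(\L), \S) \simeq \cat{Mod}(\L)$.

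What remains is pseudonaturality in the Lawvere $\A$-theory. Given a morphism of theories $H \colon \L \to \L'$ under $\af$, one must verify that precomposition by $\Gamma(H)$ on the enriched side corresponds, through the composite equivalence above, to the restriction of models along $H$. This follows by chasing through the three ingredients: the isomorphism~\eqref{eq:32} is $2$-natural in its $\pfl$-argument by Proposition~\ref{prop:28}; the counit $\varepsilon$ is $2$-natural by construction; and the identification $\pfl(\L, \cat{Set}) \cong \cat{Mod}(\L)$ is plainly functorial in $\L$.

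The main obstacle is none of the individual steps, which are either already established or straightforward from the definitions, but rather the bookkeeping needed to assemble them into a coherent pseudonatural equivalence; in particular one should check that the equivalence $\varepsilon_\L$ interacts correctly with the sieve-reflecting map $J$ so that the generated sieve $\S_\L$ is transported faithfully across $\int\!\Gamma(\L) \simeq \L$, guaranteeing that the condition ``$FJ$ preserves finite limits'' on the right corresponds exactly to ``$F$ preserves the tensors classified by $\Gamma(\L)$'' on the left. Once this compatibility is recorded, the pseudonatural equivalence of triangles falls out.
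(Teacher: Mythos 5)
Your argument is correct and is essentially the paper's own proof: both rest on the identification $\cat{Mod}(\L)\cong\pfl(\L,\cat{Set})$ from Example~\ref{ex:2}, the observation that $\S=\Gamma(\cat{Set})$, and the fact that $\Gamma$ is locally an equivalence (Theorem~\ref{thm:3}); your route via the adjunction isomorphism~\eqref{eq:32} followed by precomposition with the counit equivalence $\varepsilon_\L$ is just the standard unfolding of ``$\Gamma$ is an equivalence on homs'' already invoked in the proof of Theorem~\ref{thm:3}. The only nitpick is that the naturality you need for varying $\L$ along a theory morphism is $2$-naturality of~\eqref{eq:32} in the $\lp\text-\cat{CAT}$-variable rather than the $\pfl$-variable you cite, but this is automatic for a $2$-adjunction and does not affect the argument.
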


\begin{proof}
  We observed in Example~\ref{ex:2} above that, if $J \colon \af
\rightarrow \L$ is a Lawvere $\A$-theory, then the category
$\L\text-\cat{Mod}$ of $\L$-models is isomorphic to the hom-category
$\pfl(\L, \cat{Set})$.
 Since
  $\Gamma \colon \pfl \rightarrow \lp\text-\cat{CAT}$ is an
  equivalence on homs, and since $\Gamma(\cat{Set}) = \S$, we thus
  obtain the components of the required pseudonatural equivalence as
  the composites
  \begin{equation*}
    \L\text-\cat{Mod} \xrightarrow{\cong} \pfl(\L, \cat{Set}) \xrightarrow{\Gamma}
    \lp\text-\cat{CAT}(\Gamma(\L), \S)\text{ .} \tag*{\qEd}
  \end{equation*}
  \def\popQED{}
\end{proof}


\section{Reconstructing the monad--theory correspondence}
\label{sec:monad-theory-corr}

We have now done all the hard work necessary to prove our main result.

\begin{thm}\label{thm:5}
  The process of freely completing a one-object $\lp$-category under
  absolute tensors induces, by way of the identifications of
  Propositions~\ref{prop:8} and~\ref{prop:26}, an equivalence betwen
  the categories of finitary monads on $\A$ and of Lawvere
  $\A$-theories. This equivalence fits into a pseudocommuting
  triangle:
  \begin{equation}\label{eq:3}
    \cd[@!C@C-2em]{
      \cat{Mnd}_f(\A)^\mathrm{op}\ar[dr]_(0.4){(\thg)\text-\cat{Alg}} \ar[rr] &
      \ar@{}[d]|{\textstyle\simeq} &\cat{Law}(\A)^\mathrm{op}\rlap{ .} \ar[dl]^(0.4){\
        \ (\thg)\text-\cat{Mod}}\\ & \cat{CAT}
    }
  \end{equation}
\end{thm}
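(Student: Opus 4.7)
The plan is to assemble the theorem by combining the three identifications already proved (Propositions~\ref{prop:8}, \ref{prop:26}, and Theorem~\ref{thm:3}) and using the universal property of the free absolute-tensor completion $\int \colon \lp\text-\cat{CAT} \rightarrow \pfl$ of~\eqref{eq:34}. First, I would construct the horizontal equivalence. Given a finitary monad $T$ on $\A$, Proposition~\ref{prop:8} turns it into a one-object $\lp$-category $\T$ with unique object of extent $\af$; applying $\int$ to the unique $\lp$-functor $\I_{\af} \rightarrow \T$ produces a morphism $J \colon \af \rightarrow \int\T$ in $\pfl$ (where $\af$ carries the maximal left-exact sieve as in Example~\ref{ex:1}, which one checks is exactly $\int \I_{\af}$). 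Inspecting the explicit description of $\int$ in Proposition~\ref{prop:28}: the objects of $\int \T$ are pairs $(\ast, i)$ indexed by $i \in \af$, so $J = \iota_\ast$ is identity-on-objects; and the sieve $\S_{\int\T}$ of~\eqref{eq:35} consists by definition of functors isomorphic to $\iota_\ast F = JF$, so $J$ is sieve-reflecting. Hence $J$ is a Lawvere $\lp$-category over $\A$, corresponding via Proposition~\ref{prop:26} to a Lawvere $\A$-theory.

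For the inverse direction, given a Lawvere $\lp$-category $J \colon \af \rightarrow \L$, I would form the full sub-$\lp$-category of $\Gamma(\L)$ on the single object $J \in \Gamma(\L)_{\af}$; this is a one-object $\lp$-category of extent $\af$, hence a finitary monad on $\A$ by Proposition~\ref{prop:8}. The two constructions are mutually pseudoinverse: starting from $\T$ and forming $\int \T$, the endomorphism sub-$\lp$-category on $J = \iota_\ast$ recovers $\T$ because $\Gamma(\int\T)(\iota_\ast, \iota_\ast)(i,j) = (\int\T)(\iota_\ast i, \iota_\ast j) = \T(\ast,\ast)(i,j)$ by~\eqref{eq:31}. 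Conversely, starting from $J \colon \af \rightarrow \L$, the free completion of the endomorphism monad reconstructs all of $\L$: the sieve-reflecting hypothesis guarantees that every object of $\Gamma(\L)$ is a (necessarily absolute) tensor of $J$, so $\Gamma(\L)$ is indeed the absolute-tensor completion of its full sub-$\lp$-category on the single object $J$, as desired. Throughout, one uses Theorem~\ref{thm:3} to pass freely between $\Gamma(\L)$ and $\L$.

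Finally, I would deduce the pseudocommutative triangle in~\eqref{eq:3}. Propositions~\ref{prop:10} and~\ref{prop:32} already fit finitary monads and Lawvere $\A$-theories into pseudocommuting triangles under $\lp\text-\cat{CAT}(\thg, \S)^{\mathrm{op}} \colon (\lp\text-\cat{CAT})^{\mathrm{op}} \rightarrow \cat{CAT}$. To glue these two triangles along the horizontal equivalence just constructed, it suffices to observe that $\S = \Gamma(\cat{Set})$ is absolute-tensored by Theorem~\ref{thm:3}, so the universal property of free completion under absolute tensors furnishes, for each one-object $\lp$-category $\T$, a pseudonatural equivalence $\lp\text-\cat{CAT}(\int\T, \S) \simeq \lp\text-\cat{CAT}(\T, \S)$. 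Pasting this equivalence with the triangles of Propositions~\ref{prop:10} and~\ref{prop:32} yields~\eqref{eq:3}.

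The main obstacle I expect is the pseudoinverse verification in the middle paragraph: specifically, arguing rigorously that sieve-reflectivity plus identity-on-objects forces $\Gamma(\L)$ to be presented as the free absolute-tensor completion of the one-object sub-$\lp$-category at $J$. This is where one genuinely uses Proposition~\ref{prop:30}'s characterisation of absolute tensors in $\lp$-terms (giving the isomorphism $\upsilon \colon \iota_X F \cong \iota_Y$) to match up the sieve of~\eqref{eq:9} with the objects produced by the free-completion functor $\int$. Everything else is a bookkeeping exercise in transporting the already-established adjoint correspondence of Proposition~\ref{prop:28} and the naturality statements in Propositions~\ref{prop:10} and~\ref{prop:32}.
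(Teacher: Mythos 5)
Your proposal follows the same overall architecture as the paper's proof: reduce via Propositions~\ref{prop:8} and~\ref{prop:26} to an equivalence between one-object $\lp$-categories of extent $\af$ and Lawvere $\lp$-categories over $\A$; obtain the forward direction by applying $\int$ to $\I_{\af}\to\T$ and reading off identity-on-objects and sieve-reflectivity from the explicit description of $\int$; obtain the reverse direction by passing to the one-object full sub-$\lp$-category of $\Gamma(\L)$ on $J$ (the paper packages this as the (identity-on-objects, fully faithful) factorisation of $\Gamma(J)\circ\eta$); and deduce the triangle from Propositions~\ref{prop:10} and~\ref{prop:32} together with the fact that $\S$ is absolute-tensored, so that the universal property of free completion (equivalently, the bireflector $\Gamma\!\int$) induces $\lp\text-\cat{CAT}(\textstyle\int\T,\S)\simeq\lp\text-\cat{CAT}(\T,\S)$. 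The one place you genuinely diverge is the step you yourself flag as the main obstacle: showing $J\cong J_{\T_J}$. You propose to argue that, by sieve-reflectivity and Proposition~\ref{prop:30}, every object of $\Gamma(\L)$ is an absolute tensor of $J$, and then to invoke the principle that a fully faithful $\lp$-functor into an absolute-tensored $\lp$-category, each of whose objects is an absolute tensor of an object in the image, exhibits its codomain as the free completion. That principle is true and standard, but it is nowhere established in the paper, so as written this step is incomplete. The paper closes the same hole more economically: it compares $J_{\T_J}=\int\!F$ with $J$ via the composite $\varepsilon\circ\int\!G\colon\int\T_J\to\int\Gamma(\L)\to\L$, shows the resulting triangle under $\af$ commutes by taking adjoint transposes under the $2$-adjunction~\eqref{eq:34}, and observes that this composite is identity-on-objects and fully faithful (since $G$ is fully faithful by construction, $\int$ preserves this, and $\varepsilon$ is an equivalence by Theorem~\ref{thm:3}), hence invertible. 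If you want to keep your route you must prove the recognition principle for free absolute-tensor completions; otherwise, substituting the paper's counit argument for your middle paragraph completes the proof.
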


\begin{proof}
  To obtain the desired equivalence, it suffices by
  Propositions~\ref{prop:8} and~\ref{prop:26} to construct an
  equivalence between the category of $\lp$-categories with a single
  object of extent $\A_f^\mathrm{op}$, and the category of Lawvere
  $\lp$-categories on $\A$.

  On the one hand, given the one-object $\lp$-category $\T$, applying
  the free completion under absolute tensors
  $\int \colon \lp\text-\cat{CAT} \rightarrow \pfl$ to the
  unique $\lp$-functor $! \colon \I_{\af} \rightarrow \T$ yields a
  Lawvere $\lp$-category:
  \begin{equation}\label{eq:1}
    J_\T = \textstyle\int ! \colon \af  \rightarrow \int\!\T\rlap{ ,}
  \end{equation}
  where direct inspection of the definition of $\int$ tells us that
  $\int \I_{\af} = \af$ and that $\int !$ is identity-on-objects and sieve-reflecting.

  On the other hand, if $J \colon \af \rightarrow \L$ is a Lawvere
  $\lp$-category on $\A$, then we may form the composite around the
  top and right of the following square, wherein $\eta$ is a unit
  component of the $2$-adjunction~\eqref{eq:34}:
  \begin{equation}\label{eq:2}
    \cd{
      {\I_{\af}} \ar[r]^-{\eta} \ar[d]_{F} &
      {\Gamma(\af)} \ar[d]^{\Gamma(J)} \\
      {\T_J} \ar[r]^-{G} &
      {\Gamma(\L)\rlap{ .}}
    }
  \end{equation}
  We now factorise this composite as (identity-on-objects, fully
  faithful), as around the left and bottom, to obtain the required
  one-object $\lp$-category $\T_J$.

  The functoriality of the above assignations is direct; it
  remains to check that they are inverse to within
  isomorphism. First, if $\T$ is a one-object $\lp$-category with
  associated Lawvere $\lp$-category~\eqref{eq:1}, then in the
  naturality square
  \begin{equation*}
    \cd{
      {     \I_{\A_f^\mathrm{op}} } \ar[r]^-{\eta} \ar[d]_{!} &
      {\Gamma(\af)} \ar[d]^{\Gamma(\int !)} \\
      {\T} \ar[r]^-{\eta} &
      {\Gamma(\int\! \T)}
    }
  \end{equation*}
  for $\eta$, the left-hand arrow is identity-on-objects, and the
  bottom fully faithful (by Theorem~\ref{thm:3}). Comparing
  with~\eqref{eq:2}, we conclude by the essential uniqueness of
  (identity-on-objects, fully faithful) factorisations that $\T \cong \T_{J_\T}$ as
  required.

  Conversely, if $J \colon \af \rightarrow \L$ is a Lawvere
  $\lp$-category on $\A$ with associated one-object $\lp$-category
  $\T_J$ as in~\eqref{eq:2}, then we may form the following diagram:
  \begin{equation*}
    \cd{
      & {\af} \ar[dl]_-{J_{\T_J} = \int\!F} \ar[dr]^-{J} \\
      {\int\!\T_J} \ar[r]_-{\int\!G} & \int\!\Gamma(\L)
      \ar[r]_-{\varepsilon}&
      {\L}
    }
  \end{equation*}
  where $\varepsilon$ is a counit component of~\eqref{eq:34}. The
  composite around the left and bottom is the adjoint transpose of
  $GF$ under~\eqref{eq:34}; but by~\eqref{eq:2},
  $GF = \Gamma(J) \circ \eta$ which is in turn the adjoint transpose
  of $J$. It thus follows that the above triangle commutes. Since both
  $\int\!F$ and $J$ are identity-on-objects, so is the horizontal
  composite; moreover, $\varepsilon$ is an equivalence by
  Theorem~\ref{thm:3} while $\int\!G$ is fully faithful since $G$ is,
  by inspection of the definition of $\int$. So the lower composite is
  fully faithful and identity-on-objects, whence invertible, so that
  $J \cong J_{\T_J}$ as required.

  We thus have an equivalence as across the top of~\eqref{eq:3}, and
  it remains to show that this renders the triangle below  commutative
  to within pseudonatural equivalence. To this end, consider the
  diagram
  \begin{equation*}
    \cd[@!C@C-4em]{
    \cat{Mnd}_f(\A)^\mathrm{op}\ar[d] \ar[rr] \twocong{drr} & &
    \cat{Law}(\A)^\mathrm{op} \ar[d] \rlap{ .} \\
    \lp\text-\cat{CAT}^\mathrm{op} \ar[rr]^{\Gamma \!\int}
    \ar[dr]_(0.4){\lp\text-\cat{CAT}(\thg,\,\, \S)\ \ }& \ar@{}[d]|{\textstyle\simeq} &
    \lp\text-\cat{CAT}^\mathrm{op} \ar[dl]^(0.4){\ \ \lp\text-\cat{CAT}(\thg,\,\, \S)} \\ &
    \cat{CAT}^\op
  }
  \end{equation*}
  The top square commutes to within isomorphism by our construction of
  the equivalence $\cat{Mnd}_f(\cat{Set}) \simeq \cat{Law}$; whilst
  the lower triangle commutes to within pseudonatural equivalence
  because $\Gamma \!\int$ is a bireflector of $\lp$-categories into
  absolute-tensored $\lp$-categories, and $\S$ is by definition
  absolute-tensored. Finally, by Propositions~\ref{prop:10}
  and~\ref{prop:32}, the composites down the left and the right are
  pseudonaturally equivalent to $(\thg)\text-\cat{Alg}$ and
  $(\thg)\text-\cat{Mod}$ respectively.
\end{proof}

The only thing that remains to check is:
\begin{prop}
  \label{prop:1}
    The equivalence constructed in Theorem~\ref{thm:5} agrees with the
    equivalence constructed by Nishizawa--Power in~\cite{Nishizawa2009Lawvere}.
\end{prop}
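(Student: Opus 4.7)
My plan is to take an arbitrary finitary monad $T$ on $\A$, unpack what the equivalence of Theorem~\ref{thm:5} assigns to it, and check that the resulting Lawvere $\A$-theory agrees on the nose with the $J_T \colon \af \to \L_T$ constructed in the sketch proof of Theorem~\ref{thm:4}. Concretely, I would first describe the one-object $\lp$-category $\T$ of Proposition~\ref{prop:8}: the biequivalence $\lfp \simeq \lp$ identifies its hom-object $\T(\ast, \ast) \in \lp(\af, \af)$ with the lex profunctor $T'(i,j) = \A(j, Ti)$. Tracing the unit $\eta$ and multiplication $\mu$ of $T$ through the biequivalence then shows that the identities map of $\T$ sends $\varphi \in \A_f(j, i)$ to $\eta_i \circ \varphi \in \A(j, Ti)$, while the composition map $T' \otimes T' \to T'$, computed via the coend~\eqref{eq:15}, takes the form
\[
  (g, f) \;\mapsto\; \mu_i \circ Tf \circ g \colon k \to Tj \to T^2 i \to Ti,
\]
the familiar Kleisli-composition formula.

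Next I would apply the construction $\ic$ of Proposition~\ref{prop:28} to $\T$ and read off the Lawvere $\lp$-category $J_\T = \int\!! \colon \af \to \int\!\T$. By direct inspection, $\int\!\T$ has objects those of $\A_f$, hom-sets $\A(j, Ti) = \L_T(i, j)$, identities $1_{\ast, i} = \eta_i$, and exactly the Kleisli composition just derived: this \emph{is} the underlying category of Nishizawa--Power's $\L_T$. Moreover $J_\T$ acts on morphisms by $\varphi \mapsto \eta_i \circ \varphi$, matching their $J_T$ verbatim. By Proposition~\ref{prop:26}, the partially finitely complete structure on $\int\!\T$ is forced to be the sieve generated by $J_\T$, which coincides with the sieve~\eqref{eq:9} associated to $\L_T$ in Example~\ref{ex:2}. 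A routine naturality check in monad morphisms $T \to T''$ then completes the identification of the two equivalences; the compatibility with semantics is automatic, since both equivalences render the triangle~\eqref{eq:7} pseudocommutative.

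The single genuinely delicate point is establishing the explicit formulas for the identities and composition of $\T$ in the first step: this requires chasing the biequivalence $\lfp(\A, \A) \simeq \lp(\af, \af)$ and keeping track of how the composition monoidal structure on finitary endofunctors transports across to the $\otimes$ of $\lp$ via the coend~\eqref{eq:15}. Once these formulas are in hand, the remaining verifications are pure transcription.
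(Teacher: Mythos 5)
Your proposal follows essentially the same route as the paper's own proof: transport the monad across the biequivalence $\cat{LFP}\simeq\lp$ to get the lex profunctor $\A(j,Ti)$ with unit and multiplication given by postcomposition, form the one-object $\lp$-category, apply $\int$ to read off the Kleisli-style category with $J$ given by postcomposition with the unit, and compare with the construction in Theorem~\ref{thm:4}. Your additional explicit checks (the coend formula for composition, the forcing of the sieve via Proposition~\ref{prop:26}, naturality in monad morphisms) are correct elaborations of steps the paper leaves implicit.
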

\begin{proof}
  We prove this by tracing through the steps by which we constructed
  the equivalence of Theorem~\ref{thm:5}, starting from a finitary
  monad $S \colon \A \rightarrow \A$ in $\cat{LFP}$.

  \begin{itemize}[itemsep=0.25\baselineskip]
  \item We first transport $S$ across the biequivalence
    $\cat{LFP} \simeq \lp$ to get a monad of the form $T \colon \af \tor \af$ in
    $\lp$. From the description of this biequivalence in
    Section~\ref{sec:lex-profunctors}, the underlying lex profunctor
    $T \colon \A_f \times \af \rightarrow \cat{Set}$ is given by
    $T(i,j) = \A(j,Si)$, while the unit and multiplication of $T$ are
    induced by postcomposition with those of $S$.
  \item We next form the one-object $\lp$-category $\T$ which
    corresponds to $T$.
\item We next construct the Lawvere $\lp$-category
  $J \colon \af \rightarrow \int\!\T$ corresponding to $\T$ by
  applying $\int$ to the unique $\lp$-functor
  $\I_{\af} \rightarrow \T$. From the explicit description of $\int$
  in Proposition~\ref{prop:28}, we see that $\int\!\T$ has the same
  objects as $\af$, hom-sets $\int\!\T(i,j) = \A(j,Si)$, and
  composition as in the Kleisli category of $S$. Moreover, the functor
  $J$ is the identity on objects, and given on hom-sets by
  postcomposition with the unit of $S$.
\item Finally, the Lawvere $\A$-theory associated to this Lawvere
  $\lp$-category is obtained by applying the forgetful functor
  $\pfl \rightarrow \cat{CAT}$; which, comparing the preceding
  description with the proof of Theorem~\ref{thm:4}, is exactly the
  Lawvere $\A$-theory associated to the finitary monad
  $S \colon \A \rightarrow \A$. \qedhere
  \end{itemize}
\end{proof}

\end{document}